\documentclass[lettersize,journal]{IEEEtran}
\usepackage{amsmath,amsthm,amsfonts}
\usepackage{algorithmic}
\usepackage{array}
\usepackage[caption=false,font=normalsize,labelfont=sf,textfont=sf]{subfig}
\newtheorem{definition}{Definition}
\newtheorem{lem}{Lemma}
\newtheorem{theorem}{Theorem}
\newtheorem{remark}{Remark}
\usepackage{textcomp}
\usepackage{stfloats}
\usepackage{url}
\usepackage{verbatim}
 \usepackage{cite}
\usepackage{graphicx}
\def\BibTeX{{\rm B\kern-.05em{\sc i\kern-.025em b}\kern-.08em
    T\kern-.1667em\lower.7ex\hbox{E}\kern-.125emX}}
\usepackage{balance}
\begin{document}
\title{Privacy-Preserving Formation Control for Networked Underactuated USVs: A Passivity-Based Approach}
\author{Jingyi Zhao, Wenxuan Wang, Weijun Zhou, Yongxin Wu, Yuhu Wu, Yann Le Gorrec
\thanks{J. Y. Zhao and Y. H. Wu (Corresponding author) are with the Key Laboratory of Intelligent Control and Optimization for Industrial Equipment of Ministry of Education and the School of Control Science and Engineering, Dalian University of Technology, Dalian, 116024, China. (E-mail: zhaojingyi@dlut.edu.cn; wuyuhu@dlut.edu.cn)}
\thanks{W. X. Wang is with Zhejiang University, Hangzhou, 310015, China. (E-mail: wenxuan\_wang@zju.edu.cn)}
\thanks{Y. X. Wu  and Y. Le Gorrec are with Universit\'e Marie et Louis Pasteur, SUPMICROTECH, CNRS, institute FEMTO-ST, F-25000, Besan\c{c}on, France. (E-mail: yongxin.wu@femto-st.fr; yann.le.gorrec@ens2m.fr)}
\thanks{W. J. Zhou is with School of Information and Electrical Engineering, Hangzhou City University, Hangzhou, 310015, China. (E-mail: zhouweijun0086@gmail.com)
}
}

\markboth{IEEE/ASME Transactions on mechatronics,
	~Vol.~XX, No.~X, February~2026}%
{How to Use the IEEEtran \LaTeX \ Templates}

\maketitle

\begin{abstract}
This paper studies coordinated trajectory planning and tracking control for multiple unmanned surface vessels (USVs) under strict privacy requirements. To avoid the privacy risks associated with direct position sharing in conventional cooperative methods, the proposed approach adopts an estimated fleet centroid as the only shared variable, preventing individual trajectory disclosure while enabling coordination. Based on this interaction mechanism, a formation-oriented trajectory is generated for the fleet. The collective dynamics are modeled using Port-Hamiltonian systems, and a passivity-based tracking controller is designed for each USV to accurately follow the planned trajectories. The stability of the closed-loop system is rigorously proven, and experiments on a real USV platform confirm effective formation tracking and privacy preservation. The proposed result extends and validates through experimental results the approach in \cite{rf26} that was limited to idealized point-mass models and lacked a feedback control.
\end{abstract}

\begin{IEEEkeywords}
privacy-preserving, port-Hamiltonian systems, passivity-based control, unmanned surface vessel
\end{IEEEkeywords}

\section{Introduction}
\IEEEPARstart{W}{ith} the rapid advancement of maritime robotics, multiple unmanned surface vessels (USVs) have emerged as vital assets for various marine operations, including oceanographic mapping, environmental monitoring, and coordinated search-and-rescue \cite{rf4,rf5,rf33}. To successfully execute these missions, precise formation control can improve operational efficiency, as the fleet is required to achieve and maintain a predefined geometric configuration \cite{rf2,rf3}. In practice, as these systems transition from centralized to distributed architectures, inter-USV coordination becomes increasingly dependent on the reliability and privacy security of wireless communication networks \cite{rf6}.

A critical aspect of this security concern, often overlooked in conventional formation control \cite{rf7,rf8,rf9}, is the risk of privacy leakage in the communication process. Despite the benefits of collaborative autonomy, the frequent exchange of state information (e.g., real-time positions and velocities) exposes USVs to severe privacy risks \cite{rf10}. In privacy-sensitive scenarios, such as maritime monitoring or sensitive commercial surveying, direct state sharing can be exploited by internal Honest-But-Curious (HBC) adversaries or external eavesdroppers \cite{rf11}. These adversaries can reconstruct individual trajectories to infer mission intentions or identify vulnerabilities within the formation. Conventional privacy-preserving techniques, such as differential privacy, typically involve injecting additive noise into communicated signals \cite{rf12}. While effective, the introduced stochastic noise often causes steady-state offsets, preventing the fleet from achieving the high-precision alignment required for marine tasks \cite{rf13}. Alternatively, cryptographic solutions like homomorphic encryption \cite{rf14} offer high security but impose significant computational overhead and communication latency, which are often prohibitive for real-time control on embedded USV platforms.

Most existing privacy-preserving coordination schemes treat USVs as simplified mass-point models or single/double-integrators \cite{rf15,rf16,rf17,rf18}. Such oversimplification overlooks the complex nonlinear hydrodynamics, including underactuated constraints and time-varying environmental disturbances, such as wind and waves, which may degrade tracking performance or even cause instability during aggressive maneuvers. 
The port-Hamiltonian (PH) framework provides an effective modeling approach for USVs, as its physically intuitive representation explicitly describes the internal energy storage and external dissipation \cite{rf19,rf20}. 
Moreover, the inherent PH structure naturally induces passivity, facilitating passivity based control design, and providing a naturally stable, energy-consistent backbone for multi-USV coordination \cite{rf21,rf22}. 
While several research groups have successfully explored the formation controller within the PH framework, these studies generally focus on tracking performance without considering the inherent privacy risks during information exchange \cite{rf23,rf24,rf25}. 
This is primarily due to the conflict between privacy requirements and stability guarantees. 
Standard privacy-preserving techniques, such as noise injection or encryption, typically introduce stochastic disturbances or computational delays that may destroy the passivity of PH systems. Consequently, it is nontrivial to design a coordination law that ensures trajectory privacy while maintaining the energy-based stability of the closed-loop system. 
Although our preliminary study \cite{rf26} introduced a privacy-preserving trajectory planning scheme to address this challenge, it was limited to idealized point-mass models and lacked a feedback controller. 
Moreover, several technical challenges remain when transitioning from theoretical trajectory planning to real-time onboard execution. First, the idealized point-mass models adopted in previous studies cannot be directly extended to physical USVs, as they neglect the high-order nonlinear dynamics and underactuated constraints inherent in real-world systems. Second, environmental disturbances may compromise the passivity of the closed-loop system, potentially leading to formation instability. Addressing these challenges requires a unified framework that ensures both privacy preservation and dynamical robustness.

To this end, this paper develops an enhanced hierarchical control architecture for multi-USV systems. By synthesizing a centroid estimation-based privacy mechanism with a passivity-based tracking controller, the proposed scheme achieves precise formation maintenance while rendering trajectories indistinguishable under environmental perturbations. The main contributions are summarized in what follows:
\begin{itemize}
	\item Unlike existing PH-based formation studies \cite{rf23,rf24,rf25} that neglect privacy preservation, this paper develops a hierarchical control framework that integrates trajectory privacy with dynamical control. By synthesizing a centroid estimation-based privacy mechanism with a passivity-based tracking controller, the proposed framework ensures that individual trajectories (privacy) remain indistinguishable without compromising the physical-layer formation performance.
	\item In contrast to classical privacy-preserving methods that rely on oversimplified point-mass models \cite{rf17,rf30}, this paper develops a centroid estimation-based formation controller by explicitly incorporating underactuated USV dynamics. This approach avoids direct state exchange while preserving the passivity of the closed-loop system, thereby guaranteeing convergence to the exact desired formation without the steady-state offsets typically induced by stochastic noise injection in differential privacy methods \cite{rf12,rf28}. Furthermore, the proposed scheme circumvents the high computational overhead and time-delays associated with encryption-decryption \cite{rf18} or state decomposition processes \cite{rf29}, ensuring its suitability for real-time onboard execution.
	\item In contrast to existing studies such as \cite{rf26,rf27,rf32} that primarily focus on theoretical analysis or single-USV control, this paper presents a practical implementation of the proposed privacy-preserving formation controller for a multi-USV fleet. Experimental results validate the efficiency of the proposed framework using underactuated USV platforms in real-world disturbance environments.
\end{itemize}

The remainder of this paper is organized as follows. Section II provides the necessary preliminaries. The  formation control problem for underactuated USVs is given in Section III. Section IV presents the main results, and the efficiency of the proposed method is validated through physical experiments in Section V. Finally, Section VI provides some conclusions and perspectives.
\section{Preliminaries}
\subsection{Graph Theory}
The interaction topology among the $N$ USVs is represented by an undirected connected graph $\mathcal{G} = \{\mathcal{V}, \mathcal{E}, \mathcal{A}\}$. Here, $\mathcal{V} = \{1, \dots, N\}$ and $\mathcal{E} \subseteq \mathcal{V} \times \mathcal{V}$ denote the sets of nodes and edges, respectively. The communication weights are encoded in the symmetric adjacency matrix $\mathcal{A} = [a_{ij}] \in \mathbb{R}^{N \times N}$, where $a_{ij} = a_{ji} = 1$ if node $j$ is within the neighborhood of node $i$ (denoted by $j \in \mathcal{N}_i$), and $a_{ij} = 0$ otherwise. We assume no self-loops, i.e., $a_{ii} = 0$. Let $\mathcal{D} = \text{diag}\{d_1, \dots, d_N\}$ be the degree matrix with $d_i = \sum_{j \in \mathcal{N}_i} a_{ij}$. The Laplacian matrix is then formulated as $L = \mathcal{D} - \mathcal{A}$, which is positive semi-definite for undirected connected  graph $\mathcal{G}$. The eigenvalues of $L$ satisfy $0 = \lambda_1(L) < \lambda_2(L) \leq \dots \leq \lambda_N(L)$, where the algebraic connectivity is characterized by $\lambda_2(L) > 0$. 
For the sake of brevity, the index $i$ is implicitly assumed to belong to the node set $\mathcal{V}$ throughout the rest of this paper, unless otherwise specified.
\subsection{Port-Hamiltonian Systems}
Consider $N$ USVs whose communication topology is governed by the undirected connected graph $\mathcal{G}$. The dynamics of the $i$-th USV is given by:
\begin{equation}\label{gene}
	\dot{x}_i(t) = \left[J_i(x_i) - R_i(x_i)\right] \nabla H_i(x_i) + g_i(x_i) \tau_i(t),
\end{equation}
where $x_i(t) \in \mathbb{R}^m$ and $\tau_i(t) \in \mathbb{R}^n$ denote the system state and control input, respectively. The smooth Hamiltonian function $H_i(x_i): \mathbb{R}^m \to \mathbb{R}$ represents the total stored energy of the $i$-th USV, the interconnection matrix $J_i(x_i) \in \mathbb{R}^{m\times m}$ and the dissipation matrix $R_i(x_i)\in \mathbb{R}^{m\times m}$ satisfy $J_i(x_i) = -J_i^\top(x_i)$ and $R_i(x_i) = R_i^\top(x_i) \succeq 0$. The state-dependent input mapping is denoted by $g_i(x_i) \in \mathbb{R}^{m \times n}$. A comprehensive derivation of PH systems can be found in \cite{rf22}.
\subsection{USV Classifications and Adversaries}
We partition $\mathcal{V}$ into the neutral USV set $\mathcal{V} \setminus \mathcal{H}$ and HBC USV set $\mathcal{H}$. Their behaviors are categorized as follows \cite{rf17}:
\begin{itemize}
	\item Neutral USVs: Strictly follow the control protocols and maintain privacy neutrality; they neither attempt to infer others' states nor collude to conceal information.
	\item HBC USVs (internal adversaries): Correctly execute the designed dynamics but actively try to infer the privacy of other USVs by using the accessible information in the set $\mathcal{I}_h(t)$.
\end{itemize}

To model a privacy threat, we consider collusion and external eavesdropping:
\begin{itemize}
	\item HBC Collusion: Neighboring HBC adversaries $h, k \in \mathcal{H}$ will voluntarily share their entire local information sets to enhance inference. The accessible information set for a colluding HBC adversary $h \in \mathcal{H}$ is:
\begin{equation}\label{Eh1}
	\mathcal{I}^c_h(t) = \mathcal{I}_h(t) \cup\big( \cup_{k \in \mathcal{N}_h \cap \mathcal{H}} \mathcal{I}_k(t) \big).
\end{equation}
	If no collusion occurs, $\mathcal{I}^c_h(t) = \mathcal{I}_h(t)$.
	\item External eavesdropper: An eavesdropper possessing global topological knowledge $\mathcal{A}$ can wiretap all transmitted signals $\{F_i(t)\mid i \in \mathcal{V}\}$. Its accessible information set is:
	\begin{equation}\label{I}
		E^c(t) =\{ \mathcal{A} \} \cup \{F_i(t)\mid i \in \mathcal{V}\} \cup \{{ \mathcal{I}^c_h(t) \mid h \in \mathcal{H} }\},
	\end{equation}
	which simplifies to $E(t) = \{ \mathcal{A}, F_i(t) \mid i \in \mathcal{V} \}$ in the absence of colluding HBC adversaries.
\end{itemize}

\section{Problem Formulation}
\subsection{The USV model in PH form}
The kinematic and dynamic models of the $i$-th underactuated USV ($i\in\mathcal{V}$) related to surge, sway, and yaw motion are represented in Fig.~\ref{f1} and described as follows:
\begin{equation}\label{e1}
\!\!	\begin{cases}
		\dot q_i(t)=J(q_i)\nu_i(t),\\
		M_i \dot \nu_i(t)\!\!=\!\!-(C_i(\nu_i)+D_i(\nu_i))\nu_i(t)+\tau_{ci}(t)+\tau_{di}(t),
	\end{cases}
\end{equation}
where $q_i(t)=[x_i(t),y_i(t),\psi_i(t)]^\top$ consists of the positions $x_i(t)$, $y_i(t)$ in the earth frame along $x_E$, $y_E$, respectively, and the heading orientation $\psi_i(t)$. The vector $\nu_i(t)=[u_i(t),v_i(t),r_i(t)]^\top$ consists of the surge velocity $u_i(t)$, sway velocity $v_i(t)$ and the angular velocity $r_i(t)$, respectively. The generalized mass matrix  $M_i=\text{diag}(m_{1i},m_{2i},m_{3i})\in\mathbb{R}^{3\times 3}$ and the hydrodynamic damping matrix $D_i(\nu_i)=\text{diag}(d_{1i},d_{2i},d_{3i})\in\mathbb{R}^{3\times 3}$ are strictly positive definite, i.e.,  $M_i\succ 0$ and $D_i(\nu_i) \succ 0$ for all $\nu_i$. The rotation matrix $J(q_i)\in\mathbb{R}^{3\times 3}$ is denoted by
$$
J(q_i)=\begin{bmatrix}
	R(\psi_i) &0_{2\times 1}\\
	0_{1\times 2}& 1
\end{bmatrix},\quad R(\psi_i)=\begin{bmatrix}
\cos(\psi_i)&-\sin(\psi_i)\\ \sin(\psi_i)&\cos(\psi_i)
\end{bmatrix}.
$$
The Coriolis-centripetal matrix $C_i(v_i)$ related to the Coriolis force and centripetal force is denoted by
$$C_i(\nu_i)=\begin{bmatrix}
	0&0&-m_{2i}v_i(t)\\
	0&0&m_{1i}u_i(t)\\
	m_{2i}v_i(t)&-m_{1i}u_i(t)&0\end{bmatrix}.$$
The vector $\tau_{di}(t)\in\mathbb{R}^3$ denotes the bounded unknown disturbance of the $i$th USV, satisfying $\Vert \tau_{di}(t)\Vert\leq \tau_{max}$, and $\tau_{ci}(t)\in\mathbb{R}^3$ denotes the control input. 

\begin{figure}[!t]
	\centering
	\includegraphics[width=3in]{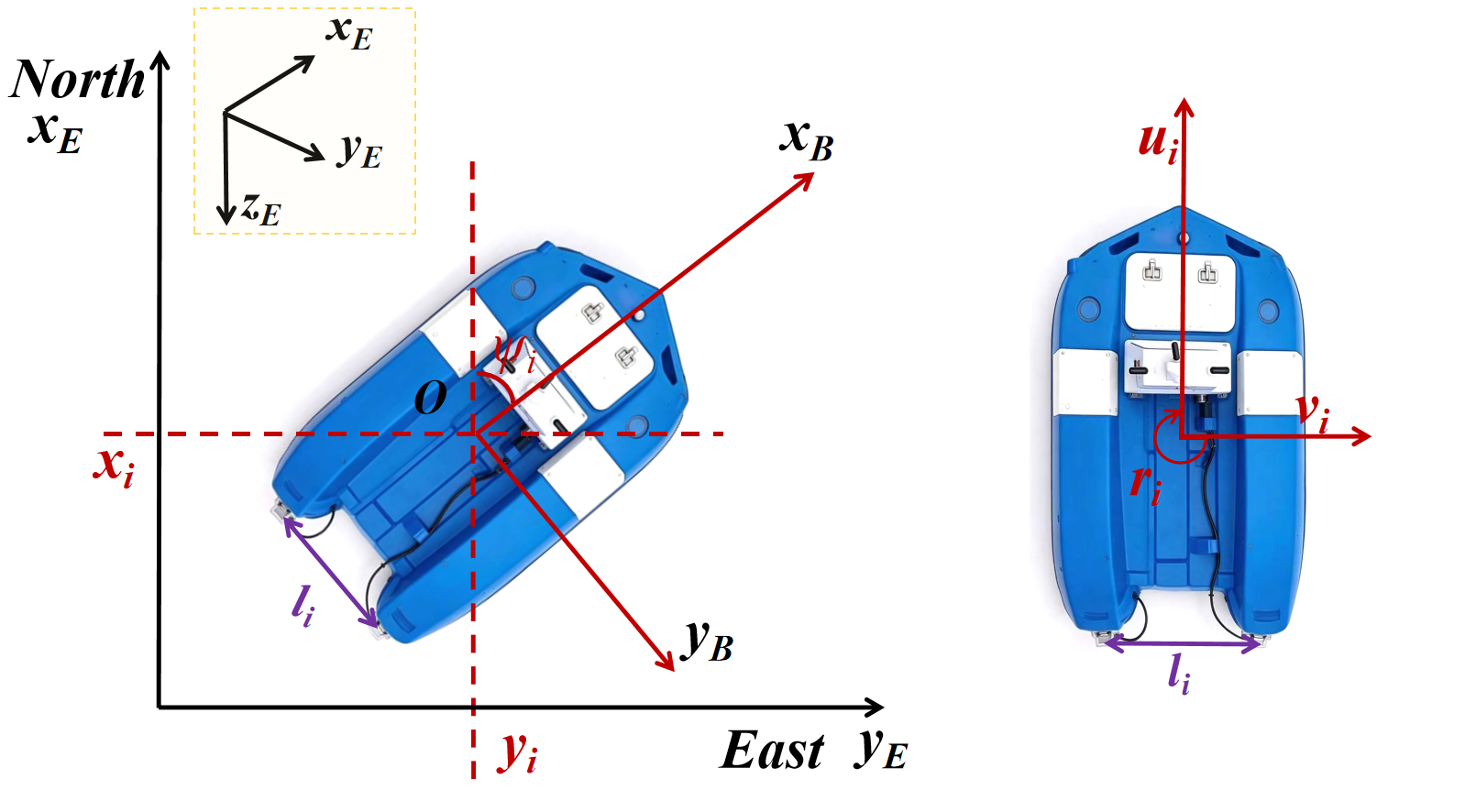}
	\caption{ Schematic depiction of an USV.}
	\label{f1}
\end{figure}
As shown in Fig.~\ref{f1}, each USV is driven by two electric motors in a symmetric configuration. Following the conventional modeling approach in \cite{fossen2011handbook}, the control inputs are transformed from individual thruster outputs into a total surge force and a resultant yaw torque as
\begin{equation}\label{e2}
	\tau_{ci}(t)=\begin{bmatrix}
		\tau_{ui}(t)\\	\tau_{vi}(t)\\	\tau_{ri}(t)
	\end{bmatrix}=\begin{bmatrix}
	F_{1i}(t)+F_{2i}(t)\\ 0\\ \frac{l_i(F_{1i}-F_{2i})}{2}(t)
	\end{bmatrix},
\end{equation}
where $F_{1i}, F_{2i}\in\mathbb{R}$ capture the propulsive forces delivered by the permanent magnet synchronous motors-driven propellers, and $l_i\in\mathbb{R}$ denotes the distance between two propellers.

For the $i$-th USV, let $p_i(t)\!=\! M_i \nu_i(t)$ denotes its momentum, where $\nu_i(t) =[u_i(t),v_i(t),r_i(t)]^\top$. The total energy is defined by the Hamiltonian function $H_i(p_i, q_i) = \frac{1}{2} p_i(t)^\top M_i^{-1} p_i(t) + V_i(q_i)$, in which $V_i(q_i)$ represents the potential energy associated with gravitational forces, defined to be zero at the water surface. Consequently, the port-Hamiltonian dynamics of the $i$-th USV can be formulated as follows \cite{Donaire2017}:
\begin{equation}\label{e3}
	\begin{aligned}
		\begin{bmatrix}
			\dot q_i(t)\\ \dot p_i(t)
		\end{bmatrix}&=\begin{bmatrix}
			0&J(q_i)\\-J^\top(q_i)&-\bar D_i(p_i)
		\end{bmatrix}\begin{bmatrix}
			\nabla_{q_i}H_i\\ \nabla_{p_i}H_i
		\end{bmatrix}
		\\&+\begin{bmatrix}
			0_{3\times 3}\\ I_3
		\end{bmatrix}(\tau_{ci}(t)+\tau_{di}(t)),
	\end{aligned}
\end{equation}
where $\bar D_i(p_i)=C_i(M_i^{-1}p_i)+D_i(M_i^{-1}p_i)$.
\subsection{Research objective}
The purpose of this work is to design a privacy-preserving formation controller for a fleet of USVs subject to unknown disturbances. The research objectives are formulated as follows:

(1) The displacement-based formation objective (Fig.~\ref{f2}-A):
\begin{equation}\label{e4}
	\lim_{t\rightarrow\infty} (Q_i(t)-Q_j(t)) = \delta^*_{ij},\quad i,j\in\mathcal{V},
\end{equation}
where $Q_i(t)=[x_i(t),y_i(t)]^\top\in\mathbb{R}^2$ denotes the position of the $i$-th USV,  $\delta^*_{ij}=Q_i^*-Q_j^*\in\mathbb{R}^2$ denotes the desired displacement between the $i$-th and the $j$-th USV ($i,j\in\mathcal{V}$), with the desired positions $Q_i^*$ and $Q_j^*$.
\begin{figure}[!t]
	\centering
	\includegraphics[width=3.5in]{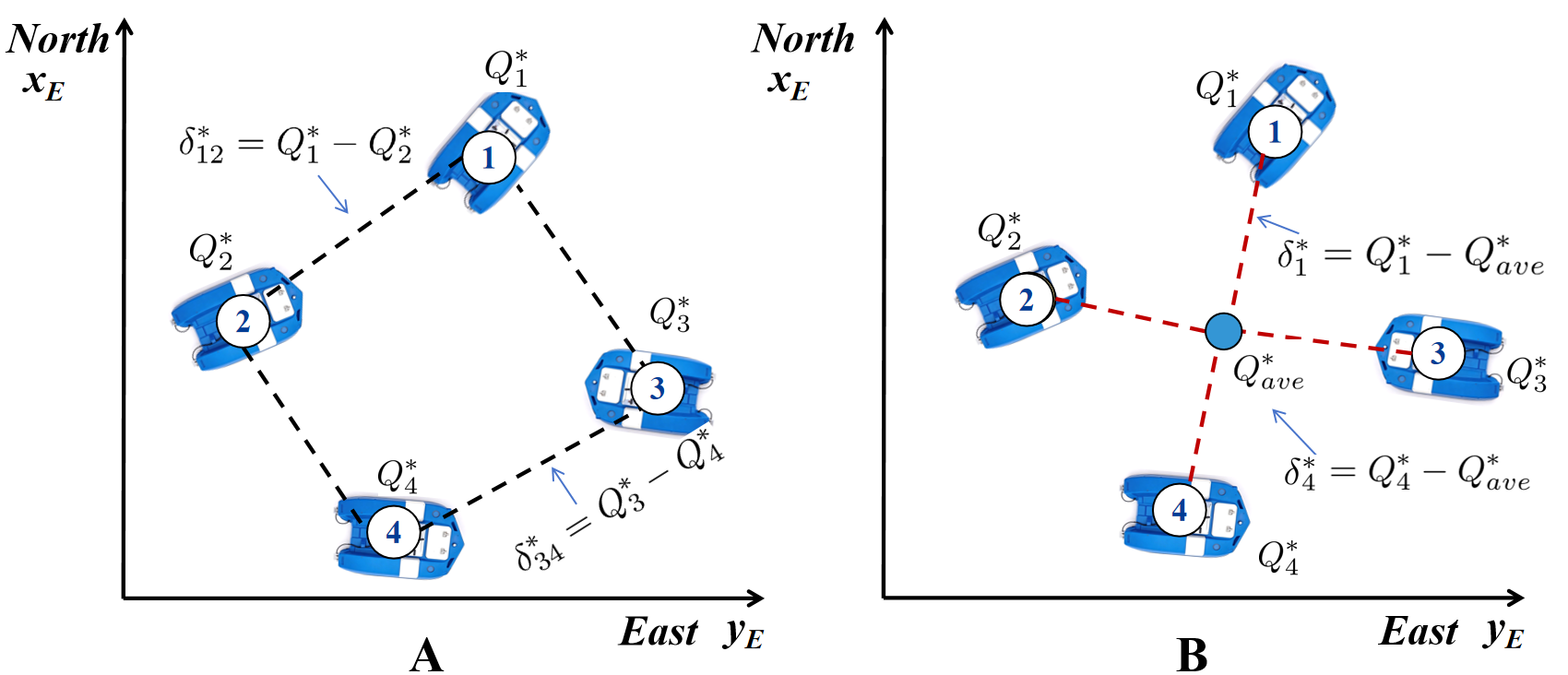}
	\caption{The displacement-based formation control objectives.}
	\label{f2} 
\end{figure}

(2) The privacy-preserving objective. Develop a defense mechanism that ensures the position trajectory $Q_i(t)$ (defined as privacy) of the $i$-th USV remains private against malicious estimation. Specifically, for both HBC adversaries and external eavesdroppers, the exact position trajectory $Q_i(t)$ should be made  indistinguishable from their observations, even if the interactive data is available to them.

\section{Main Results}
In this section, a hierarchical control framework is developed to achieve the privacy-preserving formation control objectives. As shown in Fig.~\ref{f4}, the strategy consists of two main components:
\begin{figure}[!t]
	\centering
	\includegraphics[width=3in]{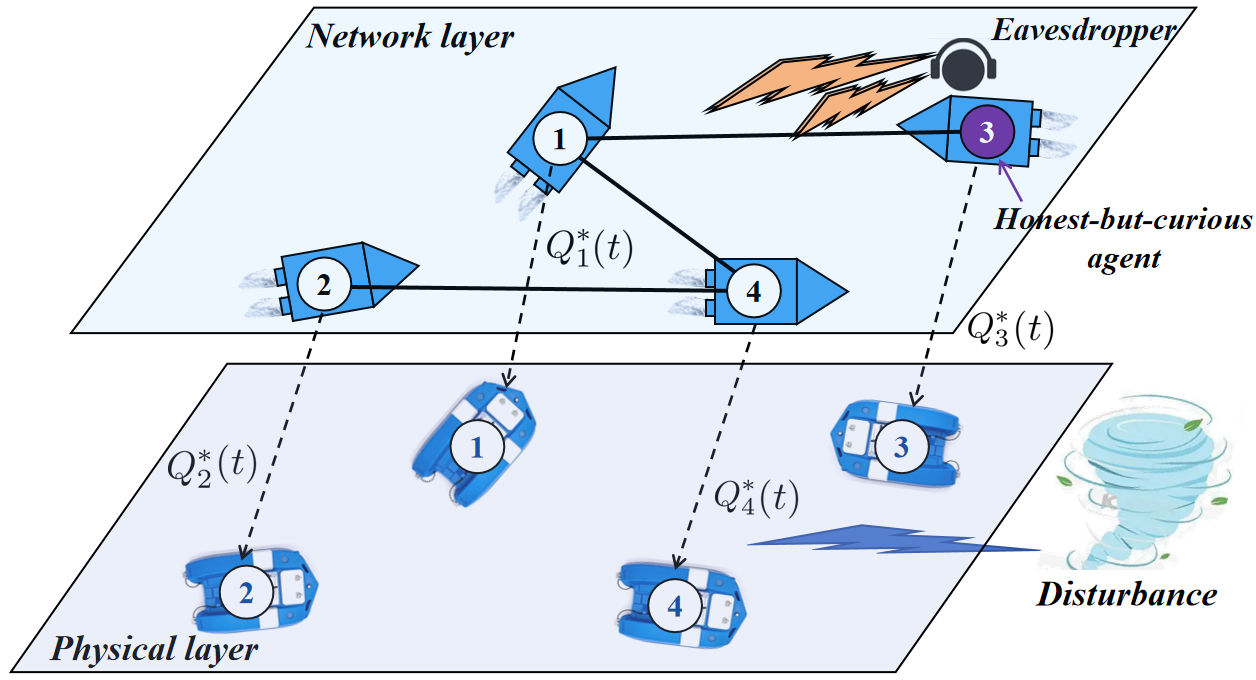}
	\caption{The scheme diagram of the hierarchical control framework.}
	\label{f4} 
\end{figure}
\begin{itemize}
	\item Privacy-preserving trajectory planning. To address privacy requirements, a reference trajectory $Q_i^*(t)$ is synthesized for the $i$-th USV by leveraging the consensus-based centroid estimation in the network layer. This mechanism ensures that the formation is achieved through the exchange of centroid  estimations rather than real-time positions, thereby embedding privacy preservation into the trajectory generation process.
	\item Dynamic tracking control. In the physical layer, a robust controller is synthesized to ensure that the actual USV dynamics track the planned kinematic trajectory in the presence of unknown disturbances.
\end{itemize}
\subsection{Privacy-preserving communication mechanism}\label{A}
To achieve the formation objective \eqref{e4}, a straightforward approach involves the exchange of the real-time position $Q_i(t)$, such as in \cite{rf24}. However, such direct interaction poses a significant risk of information leakage. To mitigate this, we introduce the desired formation's centroid $Q_{ave}^*$ as a virtual reference, as shown in Fig.~\ref{f2}-B. By maintaining a prescribed offset relative to this centroid, the fleet can maintain the desired shape without explicitly revealing individual coordinates. Hence, by defining $\delta_i^*=\frac{1}{N}\sum_{i=1}^N \delta_{ij}^*$, we can find that the objective \eqref{e4} is equivalent to 
\begin{equation}\label{e5}
	\lim_{t\rightarrow\infty} (Q_i(t)-Q_{ave}(t)) = \delta^*_{i},\quad i,j\in\mathcal{V},
\end{equation}
where $\delta^*_i=Q_i^*-Q_{ave}^*$ denotes the target displacement between the desired position $Q_i^*$ of the $i$th USV and the desired centroid $Q_{ave}^*=\frac{1}{N}\sum_{i=1}^N Q_i^*$ of the formation.

Since the real-time centroid $Q_{ave}(t)$ of USVs is unavailable to each USV, we propose a distributed consensus-based  estimator where each USV maintains a local belief $\hat{\eta}_i(t)$ of the fleet's centroid $Q_{ave}(t)$. Instead of exchanging the privacy $Q_i(t)$ with neighbors, USVs interact by sharing these centroid estimations $\hat\eta_i(t)$. The dynamics of $\hat{\eta}_i(t)$ are driven by the information exchange with neighbors, such that $\hat{\eta}_i(t) \to Q_{ave}(t)$ as $t \to \infty$. This mechanism decouples the required formation feedback from the private position trajectory, providing a fundamental layer for privacy preservation. The following example shows that the effectiveness of the proposed communication mechanism.

\subsubsection*{Example 1}
To demonstrate the privacy-preserving capability, consider a fleet of $4$-USVs indexed by $\mathcal{V}=\{1, 2, 3, 4\}$, organized in a undirected ring topology, as shown in Fig.~\ref{f3}. Under the proposed protocol, the transmitted information is the estimation  $\hat{\eta}_i(t)$. The $1$-st and the $2$-nd USV are neutral USVs, the $3$-th, $4$-th USVs are colluding HBC USVs, meaning they share all of their internal data (including their position $Q_3(t)$, $Q_4(t)$) to infer the private position $Q_1(t)$ and $Q_2(t)$. An eavesdropper intercepts all interactive messages transmitted over the communication links, including the estimation $\hat{\eta}_i(t)$ of all USVs and the position $Q_3(t)$, $Q_4(t)$ from the colluding HBC USVs. Under our mechanism, even with continuous monitoring of the estimation exchange, the observers face an ill-posed inverse problem. The real-time actual private trajectories $Q_1(t)$ (or $Q_2(t)$) are mapped to estimations $\hat{\eta}_1(t)$ (or $\hat{\eta}_2(t)$) via a privacy-preserving projection, which acts as a dynamic shield. This mapping ensures that multiple distinct trajectories can produce the same observable estimations, rendering the true trajectory indistinguishable to adversaries.
\begin{figure}[!t]
	\centering
	\includegraphics[width=3in]{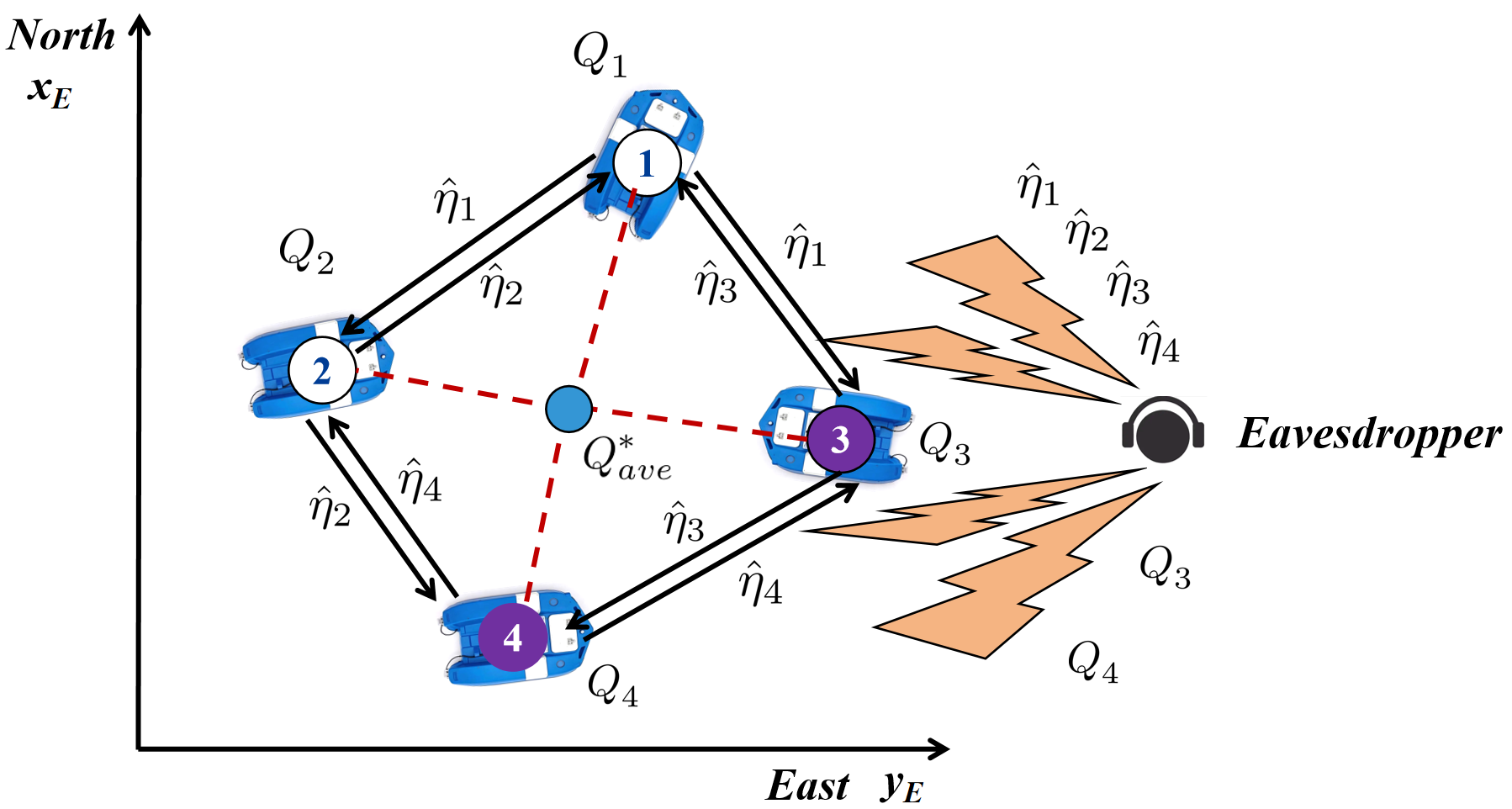}
	\caption{An example of the communication mechanism for $4$ USVs.}
	\label{f3} 
\end{figure}
\subsection{Kinematic trajectory planning}\label{S2}
Based on the privacy-preserving communication protocol defined in Section \ref{A}, we now synthesize a distributed trajectory planning algorithm for each USV. Firstly, we decompose the global formation task \eqref{e5} into $N$ coupled sub-optimization problems, where the $i$-th USV independently seeks its optimal trajectory based on local interactions.

The cost function of the $i$-th USV is designed as 
\begin{equation}\label{e6}
	V_i(Q_i,Q_{ave})=\frac{1}{2}\Vert Q_i(t)-Q_{ave}(t)-\delta_i^*\Vert^2.
\end{equation}
With the definition in \eqref{e6}, it is easy to verified that $V_i(Q_i,Q_{ave})$ is a continuously differentiable convex function with respect to $Q_i(t)$ if $Q_1(t),\cdots,Q_{i-1}(t),$ $Q_{i+1}(t),\cdots,Q_N(t)$ are fixed. Then by \cite{rf26}, the objective \eqref{e5} is reformulated into the following optimization problem:
\begin{equation}\label{p1}
	\min_{Q_i\in\mathbb{R}^2} V_i(Q_i,Q_{ave}), \quad \forall i\in\mathcal{V}.
\end{equation}
To solve the problem \eqref{p1}, the following lemma is given.
\begin{lem}\label{l1}\cite{rf34} 
The vector $Q^*=\text{col}(Q_1^*,\cdots,Q_N^*)\in\mathbb{R}^{2N}$ is one of the solutions of the optimization problem \eqref{p1} if and only if 
$$\nabla_{Q_i} V_i(Q_i,Q_{ave})\vert_{Q_i=Q_i^*} =0_2.$$
\end{lem}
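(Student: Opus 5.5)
The plan is to read problem \eqref{p1} as a collection of $N$ coupled convex programs, i.e.\ a Nash-type equilibrium problem. In the $i$-th program the decision variable is $Q_i$, and the remaining positions $Q_j$, $j\neq i$, are treated as fixed. Under this reading, $Q^*$ is a solution precisely when each block $Q_i^*$ minimizes $V_i(\cdot,Q_{ave})$ with the other blocks frozen at $Q_j^*$. The lemma then reduces to the first-order optimality characterization for an unconstrained, differentiable, convex function on $\mathbb{R}^2$, applied once for each $i$.

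The steps are as follows.

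\textbf{Step 1: explicit expression.} Write $Q_{ave}=\frac1N Q_i+\frac1N\sum_{j\neq i}Q_j$. Then
$$V_i=\tfrac12\Big\Vert (1-\tfrac1N)Q_i-c_i\Big\Vert^2,\qquad c_i=\tfrac1N\sum_{j\neq i}Q_j+\delta_i^*.$$
This is a quadratic function of $Q_i$ with Hessian $(1-\frac1N)^2 I_2\succ 0$, assuming $N\ge 2$. So it is continuously differentiable and in fact strictly convex, which confirms the convexity claim made just before \eqref{p1}.

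\textbf{Step 2: necessity.} If $Q_i^*$ minimizes $V_i$ over the open set $\mathbb{R}^2$, Fermat's rule gives $\nabla_{Q_i}V_i\vert_{Q_i^*}=0_2$.

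\textbf{Step 3: sufficiency.} Convexity of $V_i$ gives, for all $Q_i$,
$$V_i(Q_i)\ge V_i(Q_i^*)+\nabla_{Q_i}V_i(Q_i^*)^\top(Q_i-Q_i^*)=V_i(Q_i^*),$$
so a vanishing gradient implies that $Q_i^*$ is a global minimizer.

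\textbf{Step 4: assemble over agents.} Performing Steps 2–3 for every $i\in\mathcal V$ yields the equivalence for the stacked vector $Q^*$.

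It is worth computing the gradient concretely:
$$\nabla_{Q_i}V_i=(1-\tfrac1N)\big(Q_i-Q_{ave}-\delta_i^*\big).$$
It vanishes if and only if $Q_i-Q_{ave}=\delta_i^*$. Stacking this condition over all $i$ recovers objective \eqref{e5}. Since $\sum_i\delta_i^*=0$, the stacked system has a consistent, translation-invariant solution family, which explains the phrasing "one of the solutions" in the statement.

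The main obstacle is conceptual rather than technical. The statement never says what "solution" of the coupled family \eqref{p1} means. If it were read as jointly minimizing each $V_i$ over all of $Q$, the argument would break, because $Q_{ave}$ also depends on $Q_i$ and on the other agents' variables. So the first thing to pin down is the equilibrium interpretation with the other blocks fixed, as the preceding text suggests. Once that is fixed, the care needed in Step 1 is only to differentiate through $Q_{ave}$ correctly, which produces the factor $1-\frac1N$. This factor is nonzero, so it does not affect the stationarity condition.
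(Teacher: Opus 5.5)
Your proposal is correct. The paper gives no argument of its own for this lemma. It simply imports from \cite{rf34} the standard first-order characterization of Nash equilibria for games whose costs are convex and continuously differentiable in each player's own unconstrained action. Your Steps 2--4 are exactly that standard argument specialized to the costs \eqref{e6}.

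What your route adds over the bare citation is threefold:
\begin{itemize}
\item You state explicitly the Nash-equilibrium reading of \eqref{p1}, which the paper only signals through the phrase about the other $Q_j$ being fixed.
\item You actually check the convexity that the paper calls easy to verify.
\item Through $\nabla_{Q_i}V_i=(1-\frac1N)(Q_i-Q_{ave}-\delta_i^*)$, you show that the stationarity condition is the same whether or not one differentiates through $Q_{ave}$.
\end{itemize}
The last point is what justifies how the lemma is used at the end of the proof of Theorem~\ref{t1}. There the vanishing gradient is inferred from $G_i$ with $\hat\eta_i^*$ replaced by $Q_{ave}^*$, i.e.\ with $Q_{ave}$ effectively treated as frozen.

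Two minor remarks. First, Step 3 only uses convexity, since the Hessian $(1-\frac1N)^2I_2\succeq 0$ suffices, so strict convexity and the restriction $N\ge 2$ are inessential there. Second, the interpretational worry you flag affects your method rather than the truth of the statement. For $N\ge 2$, each $V_i$ has minimum value $0$ over all of $\mathbb{R}^{2N}$, attained exactly where $Q_i-Q_{ave}=\delta_i^*$. That is again exactly where $\nabla_{Q_i}V_i=0_2$, so the equivalence would also hold under the joint reading.
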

Since the real-time centroid $Q_{ave}(t)$ is unaccessible to any USV, and the estimation $\hat\eta_i(t)$ is used for interaction, we define the following mappings for the $i$-th USV as
\begin{equation}\label{e8}
	\begin{aligned}
			\mathcal{C}_i(Q_i,\hat\eta_i)&=V_i(Q_i,Q_{ave})\vert_{\hat\eta_i=Q_{ave}}\\&=\frac{1}{2}\Vert Q_i(t)-\hat\eta_i(t)-\delta_i^*\Vert^2.
	\end{aligned}
\end{equation}
The gradient of $\mathcal{C}_i(Q_i,\hat\eta_i)$ with respect to $Q_i(t)$ and $\hat\eta_i(t)$ are defined by
$G_i(Q_i,\hat\eta_i):\mathbb{R}^2\times\mathbb{R}^2\rightarrow\mathbb{R}^2$ and $\Psi_i(Q_i,\hat\eta_i):\mathbb{R}^2\times\mathbb{R}^2\rightarrow\mathbb{R}^2$ in the following form:
\begin{equation}\label{e9}
\!\!	\begin{aligned}
		G_i(Q_i,\hat\eta_i)&\!=\!\nabla_{Q_i}C_i(Q_i,\hat\eta_i)=Q_i(t)-\hat\eta_i(t)-\delta_i^*,\\
		\Psi_i(Q_i,\hat\eta_i)&\!=\!\nabla_{\hat\eta_i(t)}C_i(Q_i,\hat\eta_i)\!=\!-(Q_i(t)\!-\!\hat\eta_i(t)\!-\! \delta_i^*).
	\end{aligned}
\end{equation}
Rewriting \eqref{e8} and \eqref{e9} in a compact form, we have
\begin{eqnarray*}
	\begin{aligned}
		&\mathcal{C}(Q,\hat\eta)\!=\! \text{col}(\mathcal{C}_1(Q_1,\hat\eta_1),\!\cdots\!,\mathcal{C}_N(Q_N,\hat\eta_N)),\\
		&G(Q,\hat\eta)\!=\!\text{col}(G_1(Q_1,\hat\eta_1),\!\cdots\!,G_N(Q_N,\hat\eta_N)),
	\end{aligned}
\end{eqnarray*}
where $\hat\eta(t)=\text{col}(\hat\eta_1(t),\cdots,\hat\eta_N(t))\in \mathbb{R}^{2N}$.

By using the predefined mappings in \eqref{e9}, the desired dynamics of the $i$-th USV are designed as follows:
\begin{equation}\label{A1}
\!\!	\begin{aligned}
		\dot Q_i(t)&\!=\!-G_i(Q_i,\hat\eta_i),\\
		\dot{\hat\eta}_i(t)&\!\!=\!\!-\gamma_i\!\!\sum_{j\in\mathcal{N}_i}\!\!(\hat\eta_i(t)\!-\!\hat\eta_j(t))\!-\!\Psi_i(Q_i,\hat\eta_i)\!-\! k_{1i}\omega_i(t),\\
			\dot {\omega}_i(t)&\!\!=\!\! k_{1i}\big(\gamma_i \!\!\sum_{j\in\mathcal{N}_i}\!\!(\hat\eta_i(t)\!-\!\hat\eta_j(t))\!+\!\Psi_i(Q_i,\hat\eta_i)\big)\!\!-\! k_{2i}\omega_i(t),\\
	\end{aligned}
\end{equation}
where $Q_i(0)$ is the initial position of the $i$-th USV, $\hat\eta_i(t)$ is the estimation of $Q_{ave}(t)$, the auxiliary variable $\omega_i(t)$ is designed to improve the convergence accuracy.
 The initial values $\hat\eta_i(0)$ and $\omega_i(0)$ can be chosen arbitrary. Moreover, the constant parameters satisfy 
 \begin{equation}\label{e11}
 	\gamma_i>0, \quad 1> k_{1i}>0,\quad k_{2i}=1-k_{1i}^2.
 \end{equation}

By defining the augmented state vector $X_i(t)=\text{col}(Q_i(t),\hat\eta_i(t),\omega_i(t))\in\mathbb{R}^6$, we shape the total energy of the $i$-th USV as $H_{di}(X_i(t)):\mathbb{R}^6\rightarrow \mathbb{R}$ in the following form:
\begin{equation*}
	\begin{aligned}
		H_{di}(X_i(t))&=\mathcal{C}_i(Q_i(t),\hat\eta_i(t))+\frac{1}{2}\omega_i(t)^\top \omega_i(t)\\&+\frac{\gamma_i}{2}\sum_{j\in\mathcal{N}_i}(\hat\eta_i(t)-\hat\eta_j(t))^\top(\hat\eta_i(t)-\hat\eta_j(t)).
	\end{aligned}
\end{equation*}
Then, leveraging the energy-based interpretation and power-preserving interconnections of PH systems, the desired dynamics \eqref{A1} can be rewritten as
\begin{equation}\label{e12}
	 \dot{X}_i(t)
	=(J_{di}-R_{di})
	\frac{\partial H_{di}(X_i(t))}{\partial X_i(t)},
\end{equation}
where 
the interconnection matrix $J_{di}\in\mathbb{R}^{6\times 6}$ and the dissipation matrix $R_{di}\in\mathbb{R}^{6\times 6}$ are respectively defined as
$$J_{di}\!=\!\!
  \begin{bmatrix}
	0_{2\times 2}&\!\!\!\! 0_{2\times 2} & \!\!\!\!0_{2\times 2} \\ 0_{2\times 2} &\!\!\!\! 0_{2\times 2}& \!\!\!\!\!-k_{1i}\otimes I_2\\
	0_{2\times 2} & \!\! k_{1i}\otimes I_2 & \!\!\!\! 0_{2\times 2} \\
\end{bmatrix}\!\!,
R_{di}\!=\!\!
\begin{bmatrix}
I_2&\!\!\!\! 0_{2\times 2} &\!\!\!\! 0_{2\times 2} \\ 
0_{2\times 2} &\!\!\!\! I_2&\!\!\!\! 0_{2\times 2}\\
0_{2\times 2} &\!\!\!\! 0_{2\times 2} & \!\!\!\! k_{2i}\otimes I_2 \\
\end{bmatrix}\!\!,
$$
with properties $J_{di}=-J_{di}^\top$, $R_{di}=R_{di}^\top\succ 0$.

Define the stacked vector $X(t)=\text{col}(Q(t),\hat\eta(t),\omega(t))$, $\Psi(Q,\hat\eta)=\text{col}(\Psi_1(Q_1,\hat\eta_1),\cdots,\Psi_N(Q_N,\hat\eta_N))$ and  $\omega(t)=\text{col}(\omega_1(t),\cdots,\omega_N(t))$, then the desired dynamics \eqref{e12} for $N$-USVs are rewritten in compact form as
\begin{equation}\label{cls}
	\begin{aligned}
	\dot X(t)&\!=\!\begin{bmatrix}
		G(Q,\hat\eta)\\
		-(\gamma L\otimes I_2)\hat\eta(t)-\Psi(Q,\hat\eta)-k_1w(t)\\ k_1((\gamma L\otimes I_2)\hat\eta(t)+\Psi(Q,\hat\eta))-k_2w(t)
	\end{bmatrix}\\
	&\!=\!(J_d-R_d) \frac{\partial H_{d}(X(t))}{\partial X(t)}\\
	\end{aligned}
\end{equation}
where the constant matrices $k_1=\text{diag}(k_{11},\cdots,k_{1N})\otimes I_2$,  $k_2=\text{diag}(k_{21},\cdots,k_{2N})\otimes I_2$, $\gamma=\text{diag}(\gamma_1,\cdots,\gamma_N)$, the skew-symmetric interconnection matrix $J_d=\text{diag}(J_{d1},$ $\cdots,J_{dN})$, the positive semi-definite dissipation matrix $R_d=\text{diag}(R_{d1},\cdots,R_{dN})$, and the total Hamiltonian function is
\begin{eqnarray}\label{h}
	\begin{aligned}
		H_{d}(X(t))&=\!\sum_{i=1}^N\!\big(\mathcal{C}_i(Q_i(t),\hat\eta_i(t))\!+\!\frac{1}{2}\omega_i(t)^\top \omega_i(t)\\&+\frac{\gamma_i}{2}\hat\eta_i(t)^\top\!\sum_{j\in\mathcal{N}_i}\!\!(\hat\eta_i(t)-\hat\eta_j(t))\big).
	\end{aligned}
\end{eqnarray}
The design of the Hamiltonian $H_d(X(t))$ and the corresponding dynamics \eqref{cls} are constructed based on the following three key components:
\begin{itemize}\item The term $\sum_{i=1}^N \mathcal{C}_i(Q_i,\hat\eta_i)$ represents the potential energy injected to drive the system toward the solution of the optimization problem \eqref{p1}. According to \eqref{e8}, the minimum of $\mathcal{C}_i(Q_i,\hat\eta_i)$ is achieved if and only if the steady-state position $\lim_{t\rightarrow\infty} Q_i(t)=Q_i^*$ and the estimation $\lim_{t\rightarrow\infty} \hat\eta_i(t)=\hat\eta_i^*$ of the $i$-th USV satisfy $Q_i^*-\hat\eta_i^* = \delta_i^*$. When  $\hat\eta_i^*=Q_{ave}^*$, the desired formation is reached.
\item The term $\frac{\gamma_i}{2}\hat\eta_i(t)^\top\sum_{j\in\mathcal{N}_i}(\hat\eta_i(t)-\hat\eta_j(t))$ represents the coupling potential, which penalizes the estimation disagreement and drives the local estimation $\hat\eta_i(t)$ toward a common consensus value $\hat\eta^*$. 
\item The quadratic term $\frac{1}{2}\omega_i(t)^\top \omega_i(t)$ functions as an internal integrator. It is designed to improve the precision of the designed dynamics.
\end{itemize}

Moreover, the predefined parameters $\gamma_i$, $k_{1i}$, and $k_{2i}$ are assigned heterogeneously to the $i$-th USV as local control gains. Such a distributed configuration not only modulates the convergence rate of the closed-loop system but also significantly enhances privacy preservation. Since these heterogeneous local gains are kept private to each USV, the internal control strategy remains obscured from both HBC adversaries and external eavesdroppers. A rigorous stability analysis of the system under these gains is presented in the next section.

\subsection{Convergence analysis of the designed dynamics}
In this subsection, we analyze the convergence of the designed dynamics \eqref{cls}. It should be noted that \eqref{cls} serves as a distributed generator for the reference trajectories, rather than a controller. The analysis focuses on ensuring that the generated trajectories satisfy the formation objective \eqref{e5}.
\begin{theorem}\label{t1}
	For any given initial condition $X(0)=\text{col}(Q(0),\hat\eta(0),\omega(0))$, the trajectory generated by the designed dynamics \eqref{cls} converges at an exponential rate to a steady state $X^*$. Furthermore, this steady state $X^*$ is an optimal solution of the formation problem \eqref{p1}.
\end{theorem}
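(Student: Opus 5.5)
The plan is to exploit that \eqref{cls} is a linear time-invariant port-Hamiltonian system with a convex quadratic Hamiltonian and a positive definite dissipation matrix. Let $z(t)=\partial H_d(X(t))/\partial X(t)$. Since every term of \eqref{h} is a sum of squares (the consensus term equals $\frac{1}{2}\hat\eta^\top(\gamma L\otimes I_2)\hat\eta$ and $\gamma L\succeq 0$ once the symmetry of $\gamma_i$ across edges is taken into account), we can write $H_d(X)=\frac{1}{2}X^\top P X+b^\top X+c$ with $P=P^\top\succeq 0$. Then $z=PX+b$.

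\textbf{Minimizer and equilibria.} First I check that $H_d$ attains its minimum $H_d^\ast$. A minimizer is obtained by taking $\omega=0$, all $\hat\eta_i$ equal to a common value, and $Q_i=\hat\eta_i+\delta_i^\ast$, which makes every term of \eqref{h} vanish. Hence $b\in\mathrm{range}(P)$, so $H_d(X)-H_d^\ast=\frac{1}{2}z^\top P^{+}z$, where $P^+$ is the pseudo-inverse. Next, $R_d\succ 0$ and $J_d=-J_d^\top$ imply that $J_d-R_d$ is nonsingular. Therefore the equilibria of \eqref{cls} are exactly the points with $z=0$, i.e. the minimizers of $H_d$.

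\textbf{Exponential decay of $H_d$.} Along trajectories, $\dot H_d=z^\top(J_d-R_d)z=-z^\top R_d z\le -\rho\Vert z\Vert^2$, where $\rho=\lambda_{\min}(R_d)=\min_i\{1,k_{2i}\}>0$ by \eqref{e11}. Combining this with $H_d-H_d^\ast=\frac{1}{2}z^\top P^+z\le \Vert z\Vert^2/(2\lambda^+_{\min}(P))$, where $\lambda^+_{\min}(P)$ is the smallest nonzero eigenvalue of $P$, gives a Polyak--Łojasiewicz type inequality:
\[
\dot H_d\le -2\rho\lambda^+_{\min}(P)\,(H_d-H_d^\ast).
\]
Hence $H_d(X(t))-H_d^\ast$ decays exponentially.

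\textbf{Convergence of the state.} Since $\Vert z\Vert^2\le 2\lambda_{\max}(P)(H_d-H_d^\ast)$, the gradient $z(t)$ also decays exponentially. Then $\dot X=(J_d-R_d)z$ is exponentially decaying and hence integrable. Consequently $X(t)\to X^\ast:=X(0)+\int_0^\infty \dot X\,ds$, and $\Vert X(t)-X^\ast\Vert\le\int_t^\infty\Vert\dot X\Vert\,ds$ decays exponentially. By continuity of $z$, $z(X^\ast)=0$. This step is the main obstacle: LaSalle alone would only give convergence to a set of equilibria, and it is the linear structure with $b\in\mathrm{range}(P)$ that upgrades this to exponential convergence to a single point. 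It relies crucially on $H_d$ being bounded below with an attained minimum.

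\textbf{Identifying $X^\ast$.} Setting the three block components of $z(X^\ast)=0$ to zero gives $\omega^\ast=0$, $G_i(Q_i^\ast,\hat\eta_i^\ast)=Q_i^\ast-\hat\eta_i^\ast-\delta_i^\ast=0$, and therefore $\Psi_i=0$ and $(\gamma L\otimes I_2)\hat\eta^\ast=0$. Connectivity of $\mathcal{G}$ then yields $\hat\eta_i^\ast=\hat\eta^\ast$ for all $i$. Averaging $Q_i^\ast=\hat\eta^\ast+\delta_i^\ast$ over $i$ and using $\sum_i\delta_i^\ast=0$ (from the definition $\delta_i^\ast=Q_i^\ast-Q^\ast_{ave}$) gives $\hat\eta^\ast=\frac{1}{N}\sum_i Q_i^\ast$. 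Thus the estimate at the limit equals the true centroid, and $\nabla_{Q_i}V_i=Q_i^\ast-Q_{ave}^\ast-\delta_i^\ast=0$. Lemma~\ref{l1} then shows that $Q^\ast$ solves \eqref{p1}, which completes the argument.
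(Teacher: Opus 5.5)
Your proof is correct, but its central convergence step differs from the paper's.

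\textbf{The paper's route.} It also uses $H_d$ as the Lyapunov function, but argues concretely. It splits $\hat\eta$ into the average $\eta_1=(r^\top\otimes I_2)\hat\eta$ and the disagreement $\eta_2=(R^\top\otimes I_2)\hat\eta$, so that the consensus term involves only $\eta_2$. It then expands $\dot L_y$ term by term and applies Young's inequality to reach $\dot L_y\le-\min\{1,2k_2^o,\tfrac{2\gamma_o}{3}\}L_y$ in \eqref{dl2}. From there it asserts convergence to $X^\ast$ and identifies $X^\ast$ from the equilibrium equations exactly as you do.

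\textbf{Your route.} You obtain a Polyak--{\L}ojasiewicz inequality abstractly from the quadratic structure ($b\in\mathrm{range}(P)$, $P^+$, $\lambda^+_{\min}(P)$), and then integrate $\Vert\dot X\Vert$.

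\textbf{What each approach buys.} Your route adds rigor exactly where the paper is silent. $L_y$ is invariant under a common translation of all $Q_i$ and $\hat\eta_i$, and it vanishes on a two-dimensional affine set of equilibria. So $L_y\to 0$ by itself only shows that $X(t)$ approaches that set. It is your exponential bound on $\Vert\dot X\Vert$ that gives convergence to a single point at an exponential rate.

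The paper's route aims at a rate written explicitly in the local gains, whereas yours is hidden in $\lambda^+_{\min}(P)$. However, any valid rate must degrade as $\gamma\lambda_2(L)\to 0$, and your $\lambda^+_{\min}(P)$ does tend to $0$ with it. The paper's constant contains no Laplacian dependence at all, so on this point your less explicit bound is the sound one.

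\textbf{The symmetry caveat.} Your caveat is a genuine restriction, and both proofs share it. If the $\gamma_i$ are not all equal on a connected graph, then $\frac12\hat\eta^\top(\gamma L\otimes I_2)\hat\eta$ is not positive semidefinite. Its gradient is also not $(\gamma L\otimes I_2)\hat\eta$. Hence the port-Hamiltonian form \eqref{cls}, on which both arguments rest, holds only for equal gains.
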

\begin{proof}
Choosing the Hamiltonian function $H_{d}(X(t))$ in (\ref{h}) as a candidate Lyapunov function:
\begin{equation}\label{lya}
	\begin{aligned}
		&L_y(Q(t),\hat\eta(t),\omega(t))=1_N^\top \mathcal{C}(Q(t),\hat\eta(t))\\&+\frac{1}{2}\hat\eta(t)^\top(\gamma L\otimes I_2)\hat\eta(t)+\frac{1}{2}\omega(t)^\top \omega(t)\geq 0.
	\end{aligned}
\end{equation}
To further analyze the convergence, we introduce the following orthogonal transformation of the Laplacian matrix:
$$\eta(t) =
\begin{bmatrix}
	\eta_1(t) \\ \eta_2(t)
\end{bmatrix}=
\begin{bmatrix}
	r^\top \otimes I_2 \\ R^\top \otimes I_2
\end{bmatrix} \hat\eta(t),$$
where $r=\frac{1}{\sqrt{N}}1_N$, $r^\top R=0_{N-1}^\top$, $R^\top R=I_{N-1}$, $RR^\top=I_N-\frac{1}{N}1_N1_N^\top$, $\eta_1\in \mathbb{R}^2$ and $\eta_2\in \mathbb{R}^{2(N-1)}$. Then, we have 
\begin{eqnarray*}
	\begin{aligned}
		\hat\eta(t)^\top (\gamma L\otimes I_2)\hat\eta(t) &= \hat\eta(t)^\top (RR^\top \gamma L RR^\top\otimes I_2) \hat\eta(t)\\
		&=\eta_2(t)^\top (R^\top \gamma L R\otimes I_2)\eta_2(t).
	\end{aligned}
\end{eqnarray*}
Hence, $L_y(Q(t),\hat\eta(t),\omega(t))$ converts to
\begin{eqnarray}\label{lya1}
	\begin{aligned}
		&L_y(Q(t),\hat\eta(t),\omega(t))=1_N^\top \mathcal{C}(Q(t),\hat\eta(t))
		\\&+\frac{1}{2}\omega(t)^\top\omega(t)+\frac{1}{2}\eta_2(t)^\top (R^\top\! \gamma L R\otimes\! I_2)\eta_2(t)\geq 0.
	\end{aligned}
\end{eqnarray}
Next, we show that $\dot L_y(Q(t),\hat\eta(t),\omega(t))\leq 0.$ The dynamic system \eqref{cls} can be rewritten as follows
\begin{equation}\label{te}
\!\!\begin{cases}
		\dot Q(t) = -G(Q,\hat\eta),\\
		\dot {\hat\eta}(t)=-\Psi(Q,\hat\eta)-(\gamma LR\otimes I_2) \eta_2(t)-k_1 \omega(t),\\
		\dot{ \eta}_2 (t)\!=\! -(R^\top\otimes I_2)\Psi(Q,\hat\eta)\!-\!(R^\top \gamma L R\otimes I_2) \eta_2(t)\\
		\quad\quad-k_1 (R^\top\otimes I_2)\omega(t),\\
		\dot \omega(t) = k_1\big(\Psi(Q,\hat\eta)+(\gamma LR\otimes I_2)\eta_2(t)\big)-k_2 \omega(t).
	\end{cases}
\end{equation}
Taking the time derivative of $L_y(Q(t),\hat\eta(t),\omega(t))$ in \eqref{lya1} yields (\ref{te}), we obtain 
\begin{equation}\label{dl}
	\begin{aligned}
		&\dot L_y(Q(t),\hat\eta(t),\omega(t))=-G(Q(t),\hat\eta(t))^\top G(Q(t),\hat\eta(t))\\&-\Psi(Q(t),\hat\eta(t))^\top\Psi(Q(t),\hat\eta(t))\\
		&-((R^\top \gamma L R\otimes I_2)\eta_2(t))^\top (R^\top \gamma L R\otimes I_2)\eta_2(t)\\
		&
		-2\Psi(Q(t),\hat\eta(t))^\top (\gamma LR\otimes I_2)\eta_2(t)-k_2\omega(t)^\top \omega(t).
	\end{aligned}
\end{equation}
With the definition in (\ref{e9}), we have
\begin{equation*}
	\begin{aligned}
	\Vert G(Q,\hat\eta)\Vert^2=\Vert\Psi(Q,\hat\eta)\Vert^2
	=\sum_{i=1}^N\Vert Q_i(t)-\hat\eta_i(t)-\delta_i^*\Vert^2.
	\end{aligned}
\end{equation*}
Hence, (\ref{dl}) implies
\begin{equation*}\label{dl1}
	\begin{aligned}
		&\dot{L}_y(Q(t),\hat\eta(t),\omega(t))\leq-2\Psi(Q,\hat\eta)^\top\Psi(Q,\hat\eta)\\
		&-((R^\top \gamma L R\otimes I_2)\eta_2(t))^\top (R^\top \gamma L R\otimes I_2)\eta_2(t)\\
		&-2\Psi(Q,\hat\eta)^\top (\gamma LR\otimes I_2)\eta_2(t)-k_2\omega(t)^\top \omega(t).
	\end{aligned}
\end{equation*}
By applying the Young's inequality, one get
\begin{eqnarray}\label{dl2}
	\begin{aligned}
	&	\dot{L}_y(Q(t),\hat\eta(t),\omega(t))\\&\leq-\frac{1}{2} \Vert\Psi(Q,\hat\eta)\Vert^2-\frac{\gamma^2_o}{3} \Vert \eta_2(t)\Vert^2-k_2^o\Vert \omega(t)\Vert^2\\
		&=-1_N^\top \mathcal{C}(Q,\hat\eta)-\frac{2k_2^o}{2}\omega(t)^\top \omega(t)-\frac{2\gamma_o}{3}\frac{\gamma_o}{2}\Vert\eta_2(t)\Vert^2
		\\&\leq -\min\{1,2k_2^o,\frac{2\gamma_o}{3}\}L_y(Q(t),\hat\eta(t),\omega(t)),
	\end{aligned}
\end{eqnarray}
where $\gamma_o=\min\{\gamma_1,\cdots,\gamma_N\}$, $k_2^o=\min\{k_{21},\cdots,k_{2N}\}$. Hence, the designed dynamics \eqref{cls} exponentially converges to $X^*$, with a decay rate of at least $\min\{1,2k_2^o,\frac{2\gamma_o}{3}\}$.


Recalling \eqref{A1}, the steady-state $X^*=\text{col}(Q^*, \hat\eta^*, \omega^*)$ satisfies the following set of equations
\begin{subequations}\label{e20}
	\begin{align}
		G(Q^*,\hat\eta^*)=0_{2N},\\
		(\gamma L\otimes I_2)\hat\eta^*+\Psi(Q^*,\hat\eta^*)=0_{2N},\\
		\omega^*=0_{2N}.
	\end{align}
\end{subequations}
For an undirected and connected graph $\mathcal{G}$, the associated Laplacian matrix $L$ possesses a unique zero eigenvalue with a corresponding eigenvector $1_N$, which implies $1_N^\top L = 0$. By substituting \eqref{e9} into (\ref{e20}a), we obtain $$G_i(Q_i^*,\hat\eta_i^*) = -\Psi_i(Q_i^*, \hat\eta_i^*)=0_{2}.$$ 
Consequently, according to (\ref{e20}b), it holds that $(\gamma L \otimes I_2) \hat\eta^* = 0$. Given $\gamma \succ 0$, this property directly implies that all local estimations reach a consensus, i.e., $$\hat\eta_i^* = \hat\eta_j^*,\quad \forall i, j \in \mathcal{V}.$$ 
Furthermore, since $G_i(Q_i^*, \hat\eta_i^*) = Q_i^* - \hat\eta_i^* - \delta_i^* = 0_2$ follows from (\ref{e20}a), summing over all USVs yields $\sum_{i=1}^N Q_i^* - \sum_{i=1}^N \hat\eta_i^* = \sum_{i=1}^N \delta_i^*$. Invoking the definition of formation offsets $\delta_i^*$, where $\sum_{i=1}^N \delta_i^* = 0$ is naturally satisfied by $\delta_i^*=\frac{1}{N}\sum_{i=1}^N\delta_{ij}^*$, we conclude that $$\hat\eta_i^* = \frac{1}{N} \sum_{i=1}^N Q_i^* = Q_{ave}^*,\quad \forall i \in \mathcal{V}.$$
In light of the definition of $C_i(Q_i, \hat\eta_i)$ in (\ref{e8}), the attainment of $\hat\eta_i^* = Q_{ave}^*$ ensures that 
$$V_i(Q_i^*, Q_{ave}^*) = C_i(Q_i^*, \hat\eta_i^*) = 0_2, \quad \forall  i \in \mathcal{V}.$$ 
 The gradient of $V_i(Q_i, Q_{ave})$ with respect to $Q_i(t)$, evaluated at $Q_i^*$, is given by:
 \begin{equation*}
 	\nabla_{Q_i} V_i(Q_i, Q_{ave}) \vert_{Q_i=Q_i^*} = 0_2,  \quad \forall  i \in \mathcal{V}.
 	\end{equation*}
 Based on Lemma \ref{l1}, it is established that $Q^*$ is a minimum point of the cost function $V_i(Q_i, Q_{ave})$ in (\ref{e6}). 
\end{proof}

\subsection{Controller implementation}
Based on the stability and convergence established in Section \ref{S2}, the dynamics \eqref{cls} serve as a distributed reference generator for the formation task. To realize the physical deployment, a hierarchical control architecture is adopted where $Q_i^*(t)$ provides the time-varying reference trajectory. The objective of this section is to design a local tracking law to ensure that the actual position trajectory $Q_i(t)$ asymptotically follow $Q_i^*(t)$ under unknown bounded disturbances $\tau_{di}$, thereby achieving the desired formation in real-world environment. The overall control scheme of the $i$-th USV is given in Fig.~\ref{ff}.
 \begin{figure}[!t]
	\centering
	\includegraphics[width=3in]{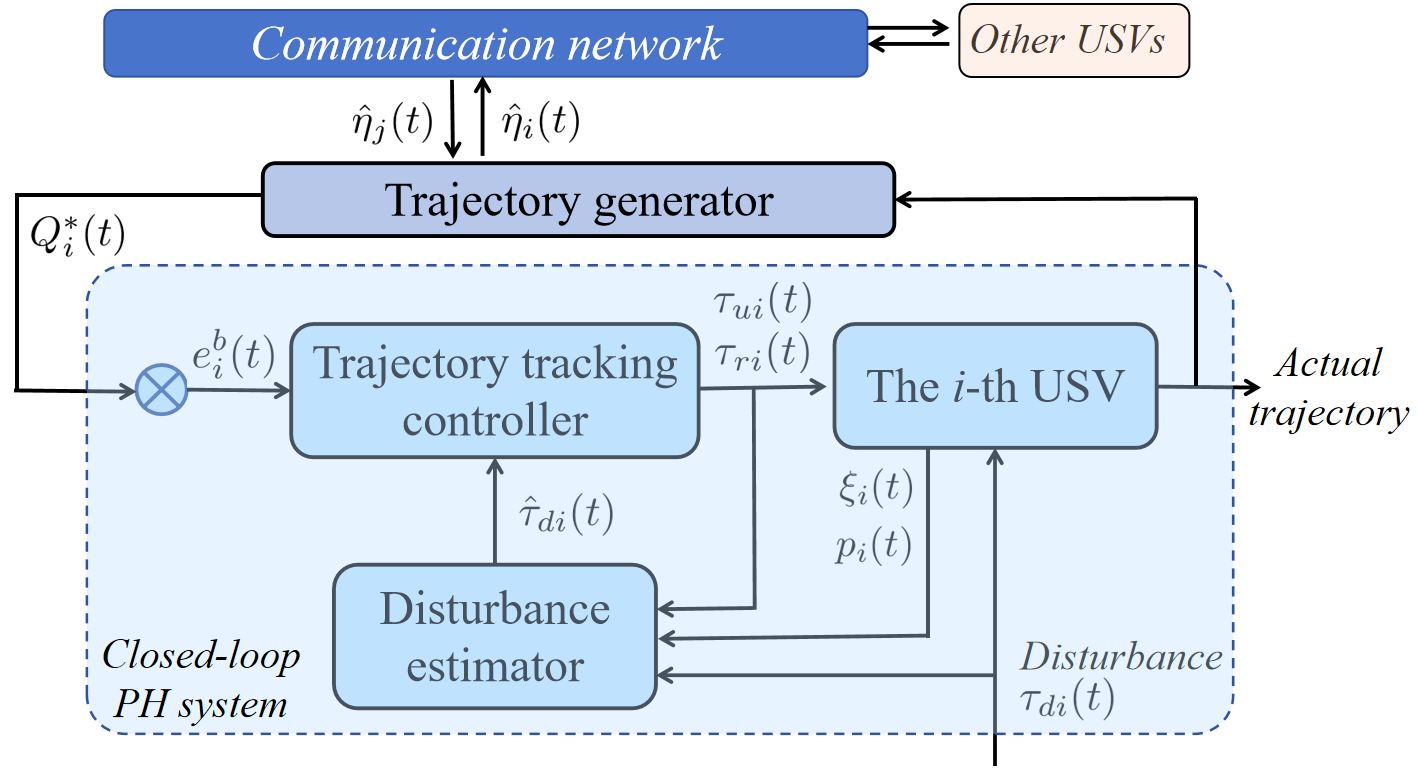}
	\caption{The control scheme of the $i$-th USV.}
	\label{ff} 
\end{figure}
\subsubsection{Coordinate transformation}
To facilitate the PH formulation while maintaining the geometric structure of the tracking task, the $i$-th USV's actual and desired positions are projected onto the instantaneous body-fixed frame. We define the following coordinate transformations:
\begin{equation}\label{e1111}
	\begin{bmatrix} \xi_i^1(t) \\ \xi_i^2(t) \end{bmatrix} \!\!=\! R^\top(\psi_i) \begin{bmatrix} x_i(t) \\ y_i(t) \end{bmatrix}, 
	\begin{bmatrix} \xi_{di}^1(t) \\ \xi_{di}^2(t) \end{bmatrix} \!\!=\! R^\top(\psi_i) \begin{bmatrix} x_i^*(t) \\ y_i^*(t) \end{bmatrix},
\end{equation}
where $\xi_i^1(t)$ and $\xi_i^2(t)$ represent projections of the inertial position onto the longitudinal and lateral axes of the current body-fixed frame, while $\xi_{di}^1(t)$ and $\xi_{di}^2(t)$ denote the projection of the time-varying reference trajectories $x_i^*(t)$, $y_i^*(t)$ onto the same frame. It is important to note that, unlike the inertial coordinates $(x_i, y_i)$, the variable $(\xi_i^1, \xi_i^2)$ is coupled with the heading angle $\psi_i$. Let $\xi_i(t)=[\xi_i^1(t),\xi_i^2(t),\psi_i(t)]^\top\in\mathbb{R}^3$, then its time derivatives inherently incorporate the rotational kinematics:
$$\dot\xi_i(t)=\begin{bmatrix} \dot{\xi}_i^1(t) \\ \dot{\xi}_i^2(t)\\ \dot \psi_i(t) \end{bmatrix} =\underbrace{\begin{bmatrix} 1 & 0 & \xi_i^2(t)\\ 0 & 1& -\xi_i^1(t) \\ 0& 0& 1\end{bmatrix}}_{J_i^\xi(\xi_i)} \begin{bmatrix} u_i(t) \\ v_i(t)\\ r_i(t) \end{bmatrix}.$$
With this formulation, the PH system \eqref{e3} becomes \cite{rf33}
\begin{equation}\label{w3}
	\begin{aligned}
		\begin{bmatrix}
			\dot{\xi}_i(t)	\\ \dot p_i(t)
		\end{bmatrix}
		&=\begin{bmatrix}
			0 & J_i^\xi(\xi_i)\\ -(J_i^\xi(\xi_i))^\top &-\bar D_i(p_i)
		\end{bmatrix}\begin{bmatrix}
			\nabla_{\xi_i}H^b_i\\ \nabla_{p_i}H^b_i
		\end{bmatrix}
		\\&+\begin{bmatrix}
			0_{3\times 3}\\ I_3
		\end{bmatrix}(\tau_c(t)+\tau_d(t)),
	\end{aligned}
\end{equation}
where $H_i^b(\xi_i, q_i) = \frac{1}{2} p_i^\top M_i^{-1} p_i + V_i^b(\xi_i)$ denotes the energy function with the potential energy $V_i^b(\xi_i)=V_i(q_i)$.


\subsubsection{Adaptive estimator}
Based on the immersion and invariance method in \cite{r1}, an adaptive estimator for the bounded unknown disturbance $\tau_{di}(t)$ is designed as follows:
\begin{equation}\label{w4}
	\begin{aligned}
		\hat{\tau}_{di}(t)&=\alpha_i\int_0^t -(J_i^\xi(\xi_i))^\top\nabla_{\xi_i} H_i^b -\bar D_i(p_i)\nabla_{p_i} H_i^b\\&+\tau_{ci}(\tau)+\hat{\tau}_{di}(\tau)d\tau+\alpha_ip_i(t),
	\end{aligned}
\end{equation}
where the tuning parameter $\alpha_i>0$.
\subsubsection{Controller design}
As illustrated in Fig.~\ref{f1}, the propulsion system consists of two symmetric stern propellers which only provide surge force and yaw moment. 
Due to the absence of a tunnel thruster, the USV is underactuated with the control input $\tau_{ci} = [\tau_{ui}, 0, \tau_{ri}]^\top$. To address the resulting non-collocated stabilization challenge, a virtual reference point $P_i$ is defined at a look-ahead distance $\beta_i>0$ from the center of mass. As noted in \cite{rf33}, this coordinate shifting introduces a geometric coupling between the yaw rate and the lateral motion of $P_i$, allowing the yaw input to indirectly regulate the lateral tracking error despite the lack of direct sway actuation. This transformation effectively recasts the underactuated task into a fully actuated tracking problem at the virtual point within the PH framework.

By shifting the tracking objective to the virtual point $P_i$, we redefine the tracking error $e_i^b(t)$ for the $i$-th USV as:
\begin{equation}\label{w1}
	e_i^b(t)=\begin{bmatrix} e^s_i(t) \\ e^w_i(t) \end{bmatrix}=	\begin{bmatrix} \xi_i^1(t)+\beta_i \\ \xi_i^2(t) \end{bmatrix}-\begin{bmatrix} \xi_{di}^1(t) \\ \xi_{di}^2(t) \end{bmatrix},
\end{equation}
where $e^s_i(t), e^w_i(t)\in\mathbb{R}$ denote the surge and sway errors of the look-ahead point, respectively, the reference trajectories $\xi_{di}^1(t)$ and $\xi_{di}^2(t)$ is obtained by \eqref{A1} and \eqref{e1111} . The objective of this section is to design a IDA-PBC\footnote{Interconnection and Damping Assignment Passivity-Based Control (IDA-PBC) \cite{rf21} achieves stabilization by designing a control law that matches the open-loop dynamics with a desired PH structure characterized by a specific energy function and dissipation.} controller such that
\begin{equation}\label{w2}
	\lim_{t\rightarrow\infty} e_i^b(t)=0.
\end{equation}

First, we design the desired error system as follows:
\begin{equation}\label{w7}
	\begin{aligned}
		\begin{bmatrix}
			\dot e^b_i(t)\\ \dot{\tilde p}_i(t)
		\end{bmatrix}&=\begin{bmatrix}
			S_i^{11}&S_i^{12}\\(S_i^{12})^\top&S_i^{22}
		\end{bmatrix}\begin{bmatrix}
			\nabla_{e^b_i}H_i^e\\ \nabla_{\tilde p_i}H_i^e
		\end{bmatrix},
	\end{aligned}
\end{equation}
where $S_i^{11}\in \mathbb{R}^{2\times 2}$ and $S_i^{22}\in \mathbb{R}^{3\times 3}$ are negative semi-definite matrices, $\tilde p_i(t)=p_i(t)-p_{di}(t)$ denotes the velocity error with the reference velocity $p_{di}(t)=\text{col}(u_{di}(t),v_{di}(t),r_{di}(t))$, the desired error Hamiltonian function is designed as
\begin{equation}\label{h1}
	H_i^e=\frac{1}{2}\tilde p_i(t)^\top M_i^{-1}\tilde p_i(t)+(e_i^b(t))^\top K_{di}e_i^b(t),
\end{equation}
where $K_{di}=\text{diag}(K_{di}^1,K_{di}^2)\in\mathbb{R}^{2\times 2}$ is positive definite . 
Differentiating both sides of \eqref{w1}, substituting \eqref{e3}, \eqref{w7} yields
\begin{equation}\label{w0}
	\begin{aligned}
	\dot{e}_i^b(t)&=d(R^\top(\psi_i)(Q_i(t)-Q_i^*(t))+\Delta_i)/dt\\
	&=S_i^{11}K_{di}e_i^b(t)+S_i^{12}M_i^{-1}\tilde p_i(t),
	\end{aligned}
\end{equation}
where the vector $\Delta_i=[\beta_i,0]^\top\in\mathbb{R}^2$. We set $S_i^{11}=\begin{bmatrix}
	-1&\frac{r_i}{2K_{di}^2}\\-\frac{r_i}{2K_{di}^1} &-1
\end{bmatrix}$ and $S_i^{12}=\begin{bmatrix}
1&0&0\\0&0&\beta_i
\end{bmatrix}$ to eliminate the term $\mathcal{T}(r_i)(R^\top(\psi_i)Q_i(t)+\Delta_i)$ and $M_i^{-1}\tilde p_i(t)$, respectively, where $\mathcal{T}(r_i)=\begin{bmatrix}
0&-r_i\\r_i&0
\end{bmatrix}$. 
Then, by defining  $\Omega_i=\text{diag}(1,\beta_i)$ and solving \eqref{w0}, the reference velocity is obtained as
$$\begin{bmatrix}{u}_{di}(t)\\ r_{di}(t)\end{bmatrix}
=\Omega_i^{-1}(\mathcal{T}(r_i)+2S^{11}_iK_{di})e_i^b(t)-\begin{bmatrix}0\\v_i
\end{bmatrix}
+R^\top(\psi_i)\dot{Q}_i^*(t).$$

The IDA-PBC controller is further designed as follows:
\begin{equation}\label{u1}
	\begin{aligned}
&	\begin{bmatrix}
		\tau_{ui}(t)\\ \tau_{ri}(t)
	\end{bmatrix}\!=\! W\big(\bar D_i(p_i)M_i^{-1}p_i(t)\!-\!2(S^{12}_i)^\top K_{di}e_i^b+\\& \!2K_{di}(S^{12}_i)^\top\Delta_i\!+\! S^{22}_iM_i^{-1}\tilde{p}_i-\hat\tau_{di}+M_iW^{-1}\begin{bmatrix}\dot{u}_{di}\\ \dot{r}_{di}\end{bmatrix}
	\big),
	\end{aligned}
\end{equation}
where the negative definite parameter matrix $S_i^{22}\in\mathbb{R}^{3\times 3}$ denotes the damping injection.

\begin{theorem}
	Consider the USV system \eqref{w3} subject to bounded external disturbances $\vert \tau_{di}(t)\vert\leq \tau_{max}$. Suppose the control law is given by \eqref{u1} with the estimator \eqref{w4}, then the closed-loop tracking error is uniformly ultimately bounded.  
\end{theorem}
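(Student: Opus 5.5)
The plan is a composite Lyapunov argument on the error system \eqref{w7}, augmented with the disturbance-estimation error of the immersion-and-invariance estimator \eqref{w4}. Define $\tilde\tau_{di}(t)=\tau_{di}(t)-\hat\tau_{di}(t)$ and take
\begin{equation*}
W_i(e_i^b,\tilde p_i,\tilde\tau_{di})=H_i^e(e_i^b,\tilde p_i)+\frac{1}{2\alpha_i}\tilde\tau_{di}^\top\tilde\tau_{di},
\end{equation*}
with $H_i^e$ from \eqref{h1}. This function is positive definite and radially unbounded in $(e_i^b,\tilde p_i,\tilde\tau_{di})$ because $M_i\succ0$ and $K_{di}\succ0$.

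\textbf{Step 1: estimator error dynamics.} Differentiate \eqref{w4}. The integrand contains exactly the drift of the $p_i$ equation in \eqref{w3} without the disturbance, plus $\hat\tau_{di}$. Substituting $\dot p_i$ from \eqref{w3} should give
\begin{equation*}
\dot{\hat\tau}_{di}=\alpha_i(\tau_{di}-\hat\tau_{di})=\alpha_i\tilde\tau_{di}
\end{equation*}
(modulo the sign convention in \eqref{w4}). Hence $\dot{\tilde\tau}_{di}=-\alpha_i\tilde\tau_{di}+\dot\tau_{di}$. To close the UUB argument, I would additionally assume $\dot\tau_{di}$ is bounded, say $\Vert\dot\tau_{di}\Vert\le\bar\tau_d$. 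This is standard for I\&I observers and appears implicit in the statement.

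\textbf{Step 2: closed-loop error form.} Substitute the controller \eqref{u1} into \eqref{w3}, written in terms of $\tilde p_i$. By construction of $u_{di},r_{di}$ and the choices of $S_i^{11}$ and $S_i^{12}$, the kinematic part reduces to \eqref{w0}. The momentum part should then become the second row of \eqref{w7} plus a residual $B\tilde\tau_{di}$, where $B$ projects onto the actuated channels. Two further residuals need tracking. The first is the unactuated sway component: the $v$ row cannot be assigned, and I would argue it is dissipative since $d_{2i}>0$, bounded by the Coriolis coupling. The second is the constant offset term $2K_{di}(S_i^{12})^\top\Delta_i$, which I expect either to cancel against the $\Delta_i$ appearing in \eqref{w0} or to leave a bounded perturbation.

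\textbf{Step 3: derivative of $W_i$.} Using the skew/negative-definite structure, the cross terms $S_i^{12}$ and $(S_i^{12})^\top$ cancel in the power balance. The skew off-diagonal parts of $S_i^{11}$ also cancel, since $\frac{r_i}{2K^2}$ and $-\frac{r_i}{2K^1}$ weighted by $K_{di}$ give a skew matrix. This leaves
\begin{equation*}
\dot W_i\le -2\Vert K_{di}e_i^b\Vert^2-\lambda_{\min}(-S_i^{22})\Vert M_i^{-1}\tilde p_i\Vert^2-\Vert\tilde\tau_{di}\Vert^2+\Vert M_i^{-1}\tilde p_i\Vert\Vert\tilde\tau_{di}\Vert+\frac{1}{\alpha_i}\Vert\tilde\tau_{di}\Vert\bar\tau_d .
\end{equation*}
Applying Young's inequality to the cross terms, with $S_i^{22}$ chosen so that $\lambda_{\min}(-S_i^{22})>1/2$, yields $\dot W_i\le -c_iW_i+\varepsilon_i$ with $c_i>0$ and $\varepsilon_i=O(\bar\tau_d^2/\alpha_i^2)$. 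The comparison lemma then gives $W_i(t)\le W_i(0)e^{-c_it}+\varepsilon_i/c_i$. This is uniform ultimate boundedness, with ultimate bound proportional to $\bar\tau_d/\alpha_i$, shrinkable by increasing $\alpha_i$.

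\textbf{Main obstacle.} The hard part is Step 2: verifying that the underactuated input $[\tau_{ui},0,\tau_{ri}]^\top$, applied through $W$, actually realises the full matching of \eqref{w7}. The sway row is unassigned, so I would treat its mismatch as an additional bounded perturbation. I would bound it using boundedness of the reference $\dot Q_i^*$, which holds by Theorem~\ref{t1} since the generator converges exponentially. I would also use $D_i\succ0$ to show $v_i$ stays bounded, being an input-to-state-stable first-order system driven by $u_ir_i$. This perturbation then only enlarges $\varepsilon_i$ rather than destroying the UUB conclusion.
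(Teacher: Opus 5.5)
Your skeleton is the paper's: $H_i^e$ plus a quadratic in the I\&I estimation error, the error equation $\dot{\tilde\tau}_{di}=-\alpha_i\tilde\tau_{di}+\dot\tau_{di}$ (as you suspected, the integrand in \eqref{w4} has the wrong sign), Young's inequality and the comparison lemma. Where you deviate, you are more careful than the paper:
\begin{itemize}
\item The paper differentiates $L_i$ along \eqref{w7} as if the matching were exact. It therefore drops the cross term $(M_i^{-1}\tilde p_i)^\top\tilde\tau_{di}$ that the use of $\hat\tau_{di}$ in \eqref{u1} necessarily produces.
\item Its constant $c_{2i}=\tau_{\max}^2/(4\varepsilon_i)$ silently uses $\tau_{\max}$ as a bound on $\dot\tau_{di}$.
\end{itemize}
Bounded $\dot\tau_{di}$ is not actually needed for UUB. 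Since $\dot{\hat\tau}_{di}=\alpha_i(\tau_{di}-\hat\tau_{di})$ is a low-pass filter, $\Vert\tilde\tau_{di}(t)\Vert\le 2\tau_{\max}+e^{-\alpha_i t}\Vert\hat\tau_{di}(0)\Vert$, and an ISS argument suffices. Your assumption only buys an ultimate bound that shrinks with $\alpha_i$.

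\textbf{First gap: the $S_i^{11}$ cancellation.} The cancellation you claim is false in general. Because $\nabla_{e_i^b}H_i^e\propto K_{di}e_i^b$ enters on both sides of the power balance, the relevant matrix is $K_{di}S_i^{11}K_{di}$. Its off-diagonal entries are $K_{di}^1r_i/2$ and $-K_{di}^2r_i/2$; only the one-sided product $S_i^{11}K_{di}$ has a skew off-diagonal part. Hence an indefinite term proportional to $r_i(K_{di}^1-K_{di}^2)e_i^se_i^w$ survives in $\dot W_i$. The yaw rate $r_i$ is not bounded a priori, and through $r_{di}$ it even depends on $v_i$. The paper buries the same issue in the state-dependent quantity $\lambda_{\max}(K_{di}^\top S_i^{11}K_{di})$, and its experimental gains $K_{di}=\mathrm{diag}(0.62,0.68)$ are not scalar. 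You need $K_{di}=k_iI_2$, or an a priori bound on $r_i$.

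\textbf{Second gap: the sway channel.} The condition $\lambda_{\min}(-S_i^{22})>1/2$ cannot be imposed in the unactuated direction. The paper itself uses $S_i^{22}=-\mathrm{diag}(1500,0,1700)$, for which its own $c_{1i}$ is in fact $0$. Your ISS argument for $v_i$ is also circular. $r_{di}$ contains $v_i$ through the $-[0,\,v_i]^\top$ term, so $u_ir_i$ is not an exogenous input. Substituting $r_i=r_{di}+\tilde r_i$ into $m_{2i}\dot v_i=-m_{1i}u_ir_i-d_{2i}v_i+[\tau_{di}]_2$ shows $v_i$ feeding back on itself with a gain proportional to $m_{1i}u_i$. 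The effective sway damping can therefore change sign at high forward speed. Boundedness of $v_i$ needs an extra condition linking $\beta_i$, $d_{2i}$, $m_{1i}$ and the surge speed, which the hypotheses do not supply.

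\textbf{How to close the argument.} Set $v_{di}:=v_i$, which is consistent with the $-v_i$ term in $r_{di}$. Then $\tilde p_i$ has no sway component, and your Young step runs only on the surge--yaw block. With scalar $K_{di}$, the subsystem formed by $e_i^b$, the surge and yaw components of $M_i^{-1}\tilde p_i$, and $\tilde\tau_{di}$ closes without reference to $v_i$, up to $\tilde\tau_{di}$-terms coming from $\dot v_i$ in $\dot r_{di}$. UUB of the tracking error then only requires that $v_i$ not escape in finite time. That holds because the $v_i$ equation is affine in $v_i$, with coefficients that are bounded once the error states are. Boundedness of $v_i$ itself, and hence of the control effort, is a separate internal-dynamics question that neither your proposal nor the paper settles.
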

\begin{proof}
	Define the disturbance estimator error as $e^d_i(t)=\hat\tau_{di}(t)-\tau_{di}(t)$, then taking the time derivative of it and substituting \eqref{w3} and \eqref{w4} yields
	\begin{equation}\label{w5}
		\dot{e}^d_i(t)=-\alpha_i e^d_i(t)-\dot {\tau}_{di}(t).
	\end{equation}
	Choosing the candidate Lyapunov function as 
	$$L_i(e^b_i,\tilde p_i,e^d_i)=H_i^e+\frac{1}{2}(e^d_i(t))^\top e^d_i(t)\geq 0.$$
	The time derivative of $L_i(e^b_i,\tilde p_i,e^d_i)$ along \eqref{w7} and \eqref{w5} is 
	\begin{equation*}
		\begin{aligned}
		\dot{L}_i&=(M_i^{-1}\tilde p_i(t))^\top S^{22}_iM_i^{-1}\tilde p_i(t)+4(K_{di}e_i^b(t))^\top S^{11}_iK_{di}e_i^b(t)\\
		&-\alpha_i(e^d_i(t))^\top e^d_i(t)-(e^d_i(t))^\top \dot{\tau}_{di}
			\leq -c_{1i} L_i +c_{2i},
		\end{aligned}
	\end{equation*}
	where $c_{1i}=\min\{-2\frac{\lambda_{\max}(M_i^{-\top}S_i^{22}M_i^{-1})}{\lambda_{\max}(M_i^{-1})},-\frac{\lambda_{\max}(K_{di}^{\top}S_i^{11}K_{di})}{\lambda_{\max}(K_{di})}$, $2\alpha_i-2\varepsilon_i\}>0$, $c_{2i}=\frac{\tau^2_{\max}}{4\varepsilon_i}$ with $\alpha_i>\varepsilon_i>0$.
	
	Hence, we have 
	$$0\leq L_i\leq \frac{c_{2i}}{c_{1i}}+(L_i(0)-\frac{c_i^2}{c_i^1})e^{-c_i^1t},$$
	which implies that the system error converge to a neighborhood of the origin with the radius $\sqrt{\frac{c_{2i}}{c_{1i}}}$. 
\end{proof}
\subsection{Privacy analysis}
Given the initial states as $Q(0)=\text{col}(Q_1(0),\cdots,$ $Q_N(0))$, $\hat\eta(0)=\text{col}(\hat\eta_1(0),\cdots,\hat\eta_N(0))$, $\omega(0)=\text{col}(\omega_1(0),$ $\cdots,\omega_N(0))$, 
if no collusion occurs, the information set accessible to the $h$-th HBC adversary is defined by $$\mathcal{I}_h=\{Q_h,\hat\eta_h,\omega_h,\delta_h^*,\hat\eta_l\mid l\in\mathcal{N}_h\}.$$

According to \eqref{Eh1}, HBC adversaries can obtain more information through collusion, which facilitates the inference of neutral USVs' private information. Therefore, HBC adversaries will inevitably collude when they are neighbors. 
Through collusion between HBC adversaries $h$ and $k$, the combined information set accessible to them is as follows:
\begin{equation}\label{Eh2}
\mathcal{I}^c_h(t) =  \mathcal{I}_h(t) \cup\big( \cup_{k \in \mathcal{N}_h \cap \mathcal{H}} \mathcal{I}_k(t) \big)..
\end{equation}
In this case, the information set accessible to the external adversary is denoted by
\begin{equation}\label{I1}
	E^c(t) = \{\mathcal{A} \} \cup \{\hat\eta_i(t)\mid i \in \mathcal{V}\} \cup \{{ \mathcal{I}^c_h(t) \mid h \in \mathcal{H} }\}.
\end{equation}
If the initial states of the $r$-th neutral USV $(r\in\mathcal{V} \setminus \mathcal{H})$ varying from $Q_r(0),\hat\eta_r(0),\omega_r(0)$ to $Q_r'(0),\hat\eta_r'(0),\omega_r'(0)$ while initial states of other USVs are fixed, we have 
\begin{equation*}
\begin{aligned}
	Q'(0)&\!=\!\text{col}(Q_1(0),\cdots\!,Q_{r-1}(0),Q_r'(0),Q_{r+1}(0)\cdots\!,Q_N(0)),\\
	\hat\eta'(0)&\!=\!\text{col}(\hat\eta_1(0),\cdots,\hat\eta_{r-1}(0),\hat\eta_r'(0),\hat\eta_{r+1}(0)\cdots,\hat\eta_N(0)),\\
	\omega'(0)&\!=\!\text{col}(\omega_1(0),\cdots,\omega_{r-1}(0),\omega_r'(0),\omega_{r+1}(0)\cdots,\omega_N(0)),
\end{aligned}
\end{equation*}
the corresponding information set accessible to the $h$-th HBC adversary (or external eavesdroppers) becomes $\mathcal{I}_h^{c'}(t)$ ($E^{c'}(t)$). 
Motivated by \cite{rf29}, the following definition is presented:
\begin{definition}\label{pd}
	The privacy $Q_r(t)$ of the $r$-th neutral USV $(r\in\mathcal{V} \setminus \mathcal{H})$ is preserved if, for any trajectory $Q_r(t)$, there exists an alternative $Q_r'(t)$ such that the information sets accessible to the HBC adversaries (or external eavesdroppers) are identical, i.e., $\mathcal{I}_h^c(t) = \mathcal{I}_h^{c'}(t)$ (or $E^c(t)=E^{c'}(t)$), $\forall t \geq 0$.
\end{definition}
\begin{theorem}\label{ppp1}
	The privacy of the $r$-th neutral USV $(r\in\mathcal{V} \setminus \mathcal{H})$ is preserved against both HBC adversaries and external eavesdroppers, even neighboring HBC adversaries are colluded.
\end{theorem}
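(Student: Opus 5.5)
The plan is to prove Theorem~\ref{ppp1} constructively, in the sense of Definition~\ref{pd}. For the neutral USV $r$, I will perturb only its own initial state $(Q_r(0),\hat\eta_r(0),\omega_r(0))$. The perturbation is chosen so that the transmitted signal $\hat\eta_r(t)$ is unchanged for all $t\ge 0$, while the private trajectory is changed to $Q_r'(t)\neq Q_r(t)$. Everything in $\mathcal{I}_h^c(t)$ and $E^c(t)$ is either a state of an HBC USV, an estimate $\hat\eta_l(t)$, or fixed data ($\mathcal{A}$, $\delta_h^*$). So it suffices to show that every $\hat\eta_l(t)$, $l\in\mathcal{V}$, and every HBC state is unchanged.

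\textbf{Construction.} Use the ansatz $\hat\eta_r'(t)=\hat\eta_r(t)$, $Q_r'(t)=Q_r(t)+\Delta(t)$ and $\omega_r'(t)=\omega_r(t)+\Delta(t)/k_{1r}$. All other agents keep their original trajectories. The coupling is then checked equation by equation in \eqref{A1}.

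\emph{First equation.} Since $\dot Q_r=-(Q_r-\hat\eta_r-\delta_r^*)$, it forces $\dot\Delta=-\Delta$, so $\Delta(t)=e^{-t}\Delta_0$ with an arbitrary $\Delta_0\neq 0$.

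\emph{Second equation.} We have $\Psi_r'=\Psi_r-\Delta$ and $k_{1r}\omega_r'=k_{1r}\omega_r+\Delta$. The two changes cancel in $-\Psi_r-k_{1r}\omega_r$, and the neighbour term is unchanged. Hence $\dot{\hat\eta}_r'=\dot{\hat\eta}_r$, which is consistent with $\hat\eta_r'=\hat\eta_r$.

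\emph{Third equation.} The right-hand side changes by $-k_{1r}\Delta-k_{2r}\Delta/k_{1r}=-(k_{1r}^2+k_{2r})\Delta/k_{1r}$. By \eqref{e11} this equals $-\Delta/k_{1r}$, which is exactly $\dot\Delta/k_{1r}$, as the ansatz requires. The condition $k_{2i}=1-k_{1i}^2$ is precisely what makes the construction close.

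\textbf{Rigor step.} The above only shows that the ansatz is a solution. I must also show it is \emph{the} solution from the perturbed initial data $Q_r'(0)=Q_r(0)+\Delta_0$, $\hat\eta_r'(0)=\hat\eta_r(0)$, $\omega_r'(0)=\omega_r(0)+\Delta_0/k_{1r}$, with all other initial states fixed. I will argue this way.
\begin{itemize}
\item Define the candidate stacked trajectory $X'(t)$: the original $X(t)$ with the $r$-th block modified as above.
\item Verify that $X'(t)$ satisfies \eqref{cls}. For agents $j\neq r$, the right-hand sides depend on agent $r$ only through $\hat\eta_r$, which is unchanged.
\item Since \eqref{cls} is linear, hence globally Lipschitz, uniqueness of solutions gives that $X'(t)$ is the actual trajectory.
\end{itemize}
It follows that all $\hat\eta_l(t)$, and all states of HBC agents (including colluders, by \eqref{Eh2}), coincide with the originals. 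Thus $\mathcal{I}_h^{c'}(t)=\mathcal{I}_h^c(t)$ and $E^{c'}(t)=E^c(t)$ for all $t\ge0$, while $Q_r'(t)=Q_r(t)+e^{-t}\Delta_0\neq Q_r(t)$.

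\textbf{Main obstacle and remarks.} I expect the main obstacle to be carrying out this uniqueness/consistency verification cleanly. In particular, I need to confirm that no term in the neighbours' dynamics depends on $Q_r$ or $\omega_r$ directly. This holds because only $\hat\eta$ is exchanged.

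I will also remark on two points.
\begin{itemize}
\item The construction applies to every neutral $r$, regardless of how many neighbours are HBC or colluding. The eavesdropper's knowledge of $\mathcal{A}$ does not help, because the perturbation is invisible in every transmitted signal.
\item The distortion decays like $e^{-t}$. This is consistent with Theorem~\ref{t1}: the ambiguity concerns the transient trajectory $Q_r(t)$ and the initial position, which is what Definition~\ref{pd} requires. The adversary also cannot exploit the decay rate to reconstruct $\Delta_0$, since $k_{1r}$ is private.
\end{itemize}
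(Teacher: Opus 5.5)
Your proposal is correct and is essentially the paper's proof. Your perturbed initial data $\hat\eta_r'(0)=\hat\eta_r(0)$, $\omega_r'(0)=\omega_r(0)+\Delta_0/k_{1r}$ are exactly the paper's choice, and your ansatz amounts to saying that $\theta_r=Q_r-k_{1r}\omega_r$ and all $\hat\eta_l$ are unchanged; the paper obtains this by rewriting the planner as a closed linear system in $(\theta,\hat\eta)$, whose closure rests, as you note, on $k_{2i}=1-k_{1i}^2$.

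Your direct equation-by-equation check plus uniqueness is, if anything, the cleaner execution. It exhibits $Q_r'(t)=Q_r(t)+e^{-t}\Delta_0\neq Q_r(t)$ explicitly, and it does not rely on the paper's transformed $\dot\theta$ equation, whose coefficients are misstated; it should read $\dot\theta=-k_2\theta+(k_2-k_1^2(\gamma L\otimes I_2))\hat\eta+k_2\delta^*$. Your closing remark that $k_{1r}$ must be private is unnecessary: the decay rate does not involve $k_{1r}$, and the observations coincide even if all gains are known.
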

\begin{proof}
	Substituting \eqref{e9} into \eqref{A1} and defining the coordinate transformation $\theta(t) = Q(t)- k_1\omega(t)$,  then the multi-USV system's dynamics \eqref{cls} can be rewritten as 
	\begin{equation}\label{ety1}
		\begin{cases}
			\dot{\theta}(t)\!=\!-k_2\theta(t)\!+\!(2I_2\!+\!k_1^\top k_1\gamma(L\otimes I_2))\hat{\eta}(t)\!+\!2\delta^*,\\ \dot{\hat{\eta}}(t) \!=\! -\gamma (L\otimes I_2) \hat{\eta}(t)\! + \!\theta(t)\!-\!\hat{\eta}(t)\!-\!\delta^*,
		\end{cases}
	\end{equation}
	where $\delta^*=\text{col}(\delta_1^*,\cdots,\delta_N^*)$. 
	From \eqref{ety1}, the evolution of $\hat{\eta}(t)$ is uniquely determined by the initial conditions $\theta(0)$, $\hat{\eta}(0)$ and the constant reference $\delta^*$.
	For any variation from $Q_r(0)$ to $Q_r'(0)$, 
	if USV $r$ initializes its auxiliary variable as $\omega_r'(0) = \omega_r(0) + k_1^{-1} (Q_r'(0)- Q_r(0))$, and chooses $\hat\eta_r'(0)=\hat\eta_r(0)$, then we have the initial value  $\theta_r'(0) = Q_r'(0) - k_1 \omega_r'(0)=\theta_r(0)$ remains invariant. 
	Consequently, $\theta(0)$ and $\hat{\eta}(0)$ are unchanged, leading to identical trajectories of $\hat{\eta}(t)$ for all $t \geq 0$. 
	
Note that the adversaries' accessible information set \eqref{Eh2} and \eqref{I1} depend solely on the trajectories of $\hat{\eta}(t)$ and the local variables of the HBC adversaries (which are independent of $Q_r(t)$), we conclude that
$\mathcal{I}_h^c(t) = \mathcal{I}_h^{c'}(t)$ (or $E^c(t)=E^{c'}(t)$) holds for all $t \geq 0$.
	Hence, the variation in $Q_r(t)$ is indistinguishable against HBC adversaries and external eavesdroppers, which completes the proof. 
	\end{proof}
\section{Experimental results}
This section provides a description of the experimental platform and the subsequent field implementation of the proposed hierarchical control method for USV formation coordination.  
\subsection{Experimental Setup}
\subsubsection{Hardware Platform and Sensing System}
The experimental validation is conducted using a custom-developed twin-hull USV. The USV measures $1.0\,\text{m} \times 0.6\,\text{m} \times 0.4\,\text{m}$ with a total mass of $60\,\text{kg}$ and a shallow draft of $0.1\,\text{m}$. 
 As illustrated in Fig.~\ref{f5}, each USV is equipped with
 \begin{itemize}
 	\item Localization. A dual-antenna real-time kinematic Global Navigation Satellite System (GNSS) provides centimeter-level positioning and absolute heading.
 	\item Inertial Sensing. An Inertial Measurement Unit (IMU) measures linear acceleration and angular velocity, with an internal magnetometer for yaw compensation.
 	\item Propulsion. Twin $9.5\,\text{kg}$-grade brushless thrusters provide differential thrust, controlled by 80A bi-directional Electronic Speed Controllers (ESCs) to achieve 3-DOF maneuverability.
 \end{itemize}
 \begin{figure}[!t]
 	\centering
 	\includegraphics[width=3.5in]{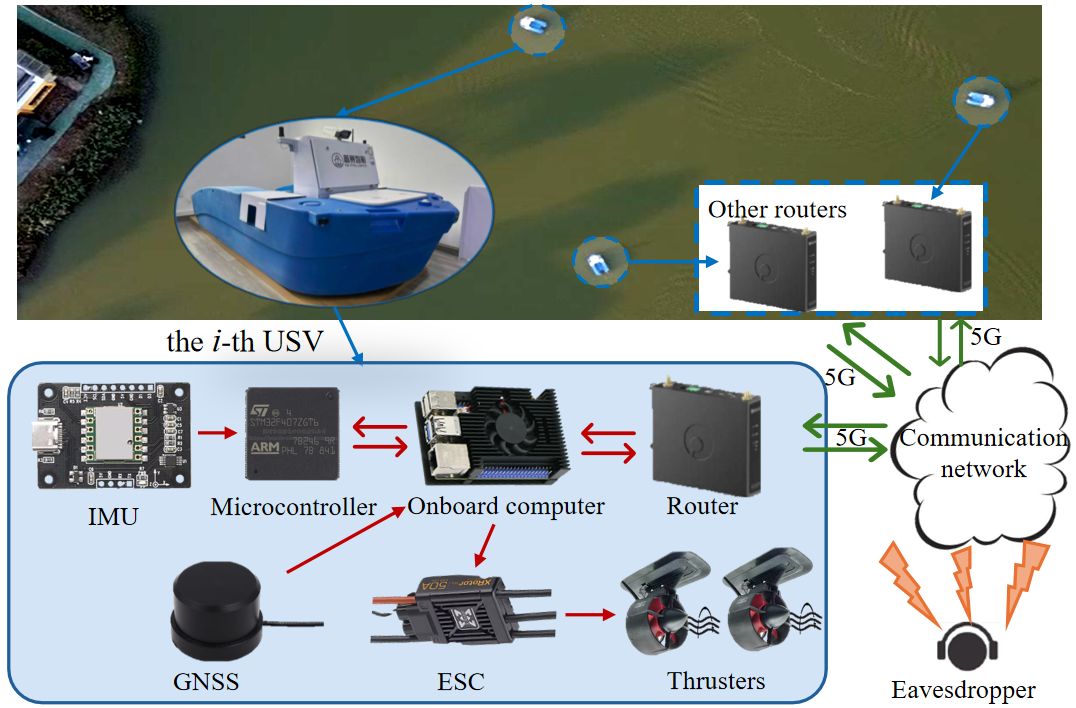}
 	\caption{Real-world experiments framework.}
 	\label{f5} 
 \end{figure}
\subsubsection{Distributed Control and Communication Architecture}
The system implements a hierarchical, distributed control architecture. The operational flow for each USV is as follows:
\begin{itemize}
	\item Communication. The USV captures its motion state via the onboard GNSS and IMU. Unlike centralized frameworks, the $i$-th USV  communicates with its neighbors via a $4G$-based Peer-to-Peer (P2P) network to exchange the estimation $\hat\eta_i(t)$.
	\item Trajectory planning. The onboard industrial computer functions as the central station for each USV. It generates planning trajectories locally based on the received neighbor information $\hat\eta_j(t)$, $j\in\mathcal{N}(i)$.
	\item Instruction Execution. Once the computing platform generates the control action, it is transmitted to the STM32F407 microcontroller via a serial bus. The microcontroller then decodes these instructions into Pulse Width Modulation signals to drive the propulsion system.
\end{itemize}
\subsubsection{Environmental Disturbance Modeling}
To rigorously evaluate the robustness of the proposed formation control strategy, the outdoor pond experiment accounts for inherent environmental uncertainties. 
The lumped disturbance $\tau_{di}$ acting on the $i$-th USV is modeled as a combination of time-varying environmental forces (wind and waves):
$$\tau_{di}(t) = \sum_{k=1}^{n} b^1_{ik} \sin(b^2_{ik} t + b^3_{ik}) + b^4_i,$$where $b^1_{ik}$, $b^2_{ik}$, and $b^3_{ik}$ represent the unknown amplitude, frequency, and phase of the $k$-th harmonic component, respectively, and $b^4_i$ denotes the bounded residual noise. In our implementation, the control system leverages the integrated IMU and GNSS data to estimate these perturbation forces as $\hat\tau_{di}(t)$, ensuring that the formation maneuvers remain stable despite the periodic oscillations of the aquatic environment.
\subsection{Experiment Results and Analysis}
To validate the effectiveness of the proposed hierarchical control method, we carry out the experiment on $4$-USVs in an outdoor pond environment (Hangzhou, China). 

The initial velocity of the $i$-th USV is $\nu_i(0)=0_3$, the initial positions of $4$-USVs and the predefined displacement vectors are given in Table~\ref{tab1}.
\begin{table}
	\begin{center}
		\caption{The predefined vectors of USVs.}
		\label{tab1}
		\begin{tabular}{| c | c | c |}
			\hline
			$\quad$ & 	Initial positions $Q_i(0)$& Desired displacement $\delta_i^*$\\		
			\hline
			$i=1$ & $[-17.12,17.76]^\top$ &$[-25.87,-25.87]^\top$\\
				$i=2$ & $[-11.91,-25.27]^\top$ & $[-17.71,-17.71]^\top$\\
				$i=3$& $[13.24,-34.10]^\top$  & $[26.33,26.33]^\top$\\
				$i=4$&$[8.68,9.45]^\top$ &  $[17.26,17.26]^\top$ \\ 
			\hline
		\end{tabular}
	\end{center}
\end{table} 
Utilizing the trajectory planning method proposed in Section~\ref{S2}, the desired quadrilateral formation trajectory is successfully generated by \eqref{cls}, as shown in Fig.~\ref{f6}.

Assume the environmental disturbances are produced by $\tau_{di}(t)=(0.01\sin(0.01t+0.01)+0.01)\otimes 1_3$. The estimator gains in \eqref{w4} are chosen as $\alpha_i=500$. The controller gains in \eqref{u1} are chosen as $S_i^{22}=-\text{diag}(1500,0,1700)$, $K_{di}=\text{diag}(0.62,0.68)$. By using the trajectory tracking controller \eqref{u1}, the actual trajectories of USVs are shown in Fig.~\ref{f8}, and the comparison between them and the planned trajectories in \eqref{cls} on $x$-axis are given in Fig.~\ref{f7}.
 \begin{figure}[!t]
	\centering
	\includegraphics[width=2.7in]{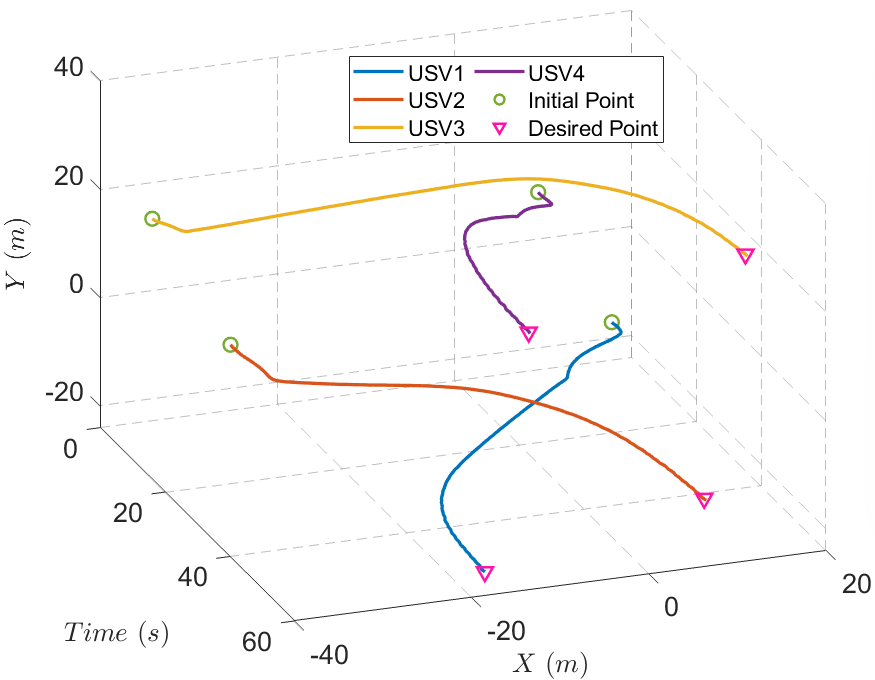}
	\caption{Planned trajectories of USVs from initial clustered positions to the desired quadrilateral formation in 3D space (X-Y-t).}
	\label{f6} 
\end{figure}
 \begin{figure}[!t]
	\centering
	\includegraphics[width=2.7in]{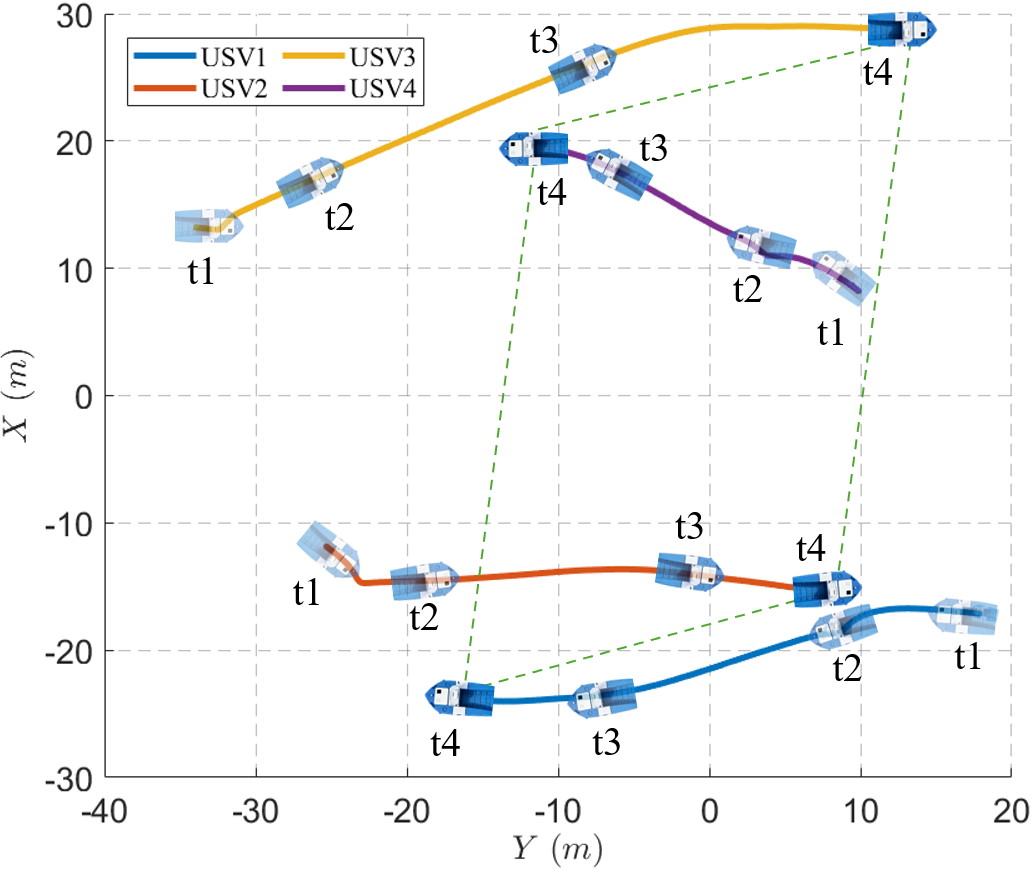}
	\caption{The actual trajectories of USVs, where $t_1=0s$, $t_2=10s$, $t_3=22s$, $t_4=54s$.}
	\label{f8} 
\end{figure}
\begin{figure}[htbp]
	\centering
	\begin{minipage}{0.4\textwidth}
		\centering
		\includegraphics[width=\textwidth]{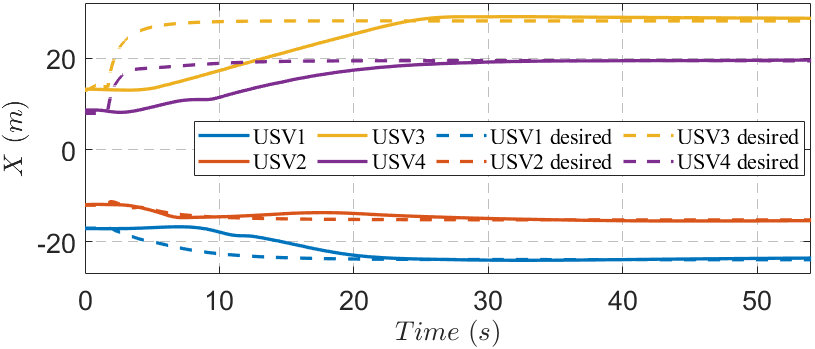}
	\end{minipage}
	\begin{minipage}{0.4\textwidth}
		\centering
		\includegraphics[width=\textwidth]{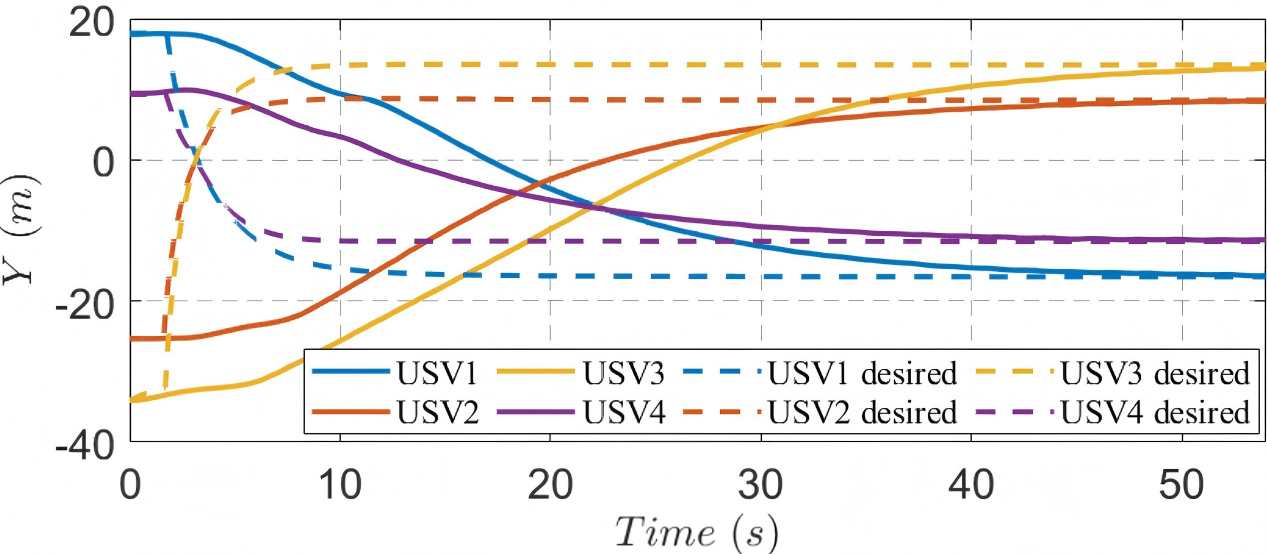}
	\end{minipage}
	\caption{Comparison of the planned (dashed) and actual (solid) trajectories.}
	\label{f7}
\end{figure}

As shown in Fig.~\ref{f6}-\ref{f7}, the proposed trajectory planning method is capable of generating smooth trajectories from the initial positions to the desired formation. Moreover, the designed controller can effectively track the target trajectories even in the presence of environmental disturbances. Next, we further evaluate the privacy preservation performance.

During the formation maneuvering of USVs, the exchanged information among them is confined to the centroid estimation $\hat{\eta}_i(t)$, which is the sole data transmitted over the network and susceptible to eavesdropping. Given that the estimation $\hat{\eta}_i(t)$ consistently differs from the actual position $Q_i(t)$, it is insufficient for an adversary to deduce the actual positions of the USVs. The comparison of $\hat\eta_i(t)=[\eta_{xi},\eta_{yi}]^\top$ and $Q_i(t)$ are shown in Fig.~\ref{f9}.
\begin{figure}[htbp]
	\centering
	\begin{minipage}{0.38\textwidth}
		\centering
		\includegraphics[width=\textwidth]{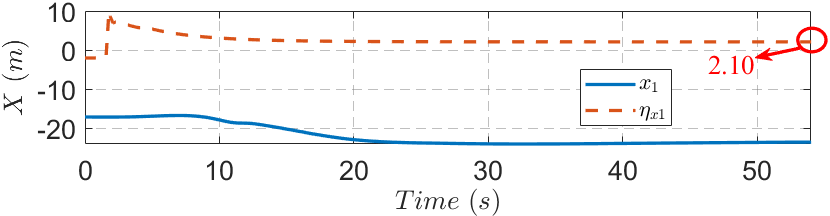}
	\end{minipage}
	\begin{minipage}{0.38\textwidth}
		\centering
		\includegraphics[width=\textwidth]{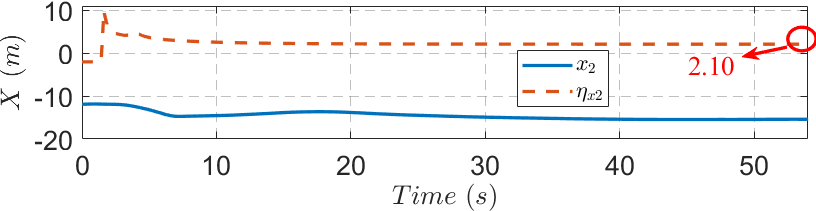}
	\end{minipage}
		\begin{minipage}{0.38\textwidth}
		\centering
		\includegraphics[width=\textwidth]{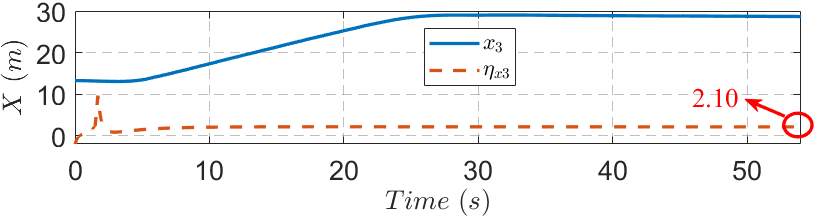}
	\end{minipage}
	\begin{minipage}{0.38\textwidth}
		\centering
		\includegraphics[width=\textwidth]{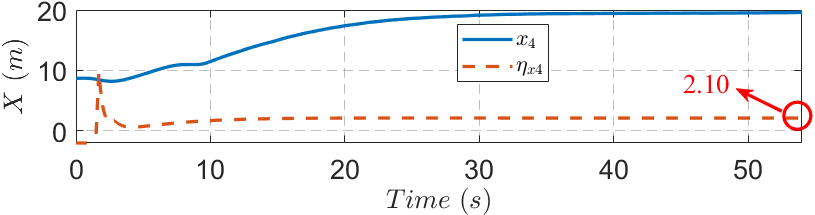}
	\end{minipage}
	\begin{minipage}{0.38\textwidth}
		\centering
		\includegraphics[width=\textwidth]{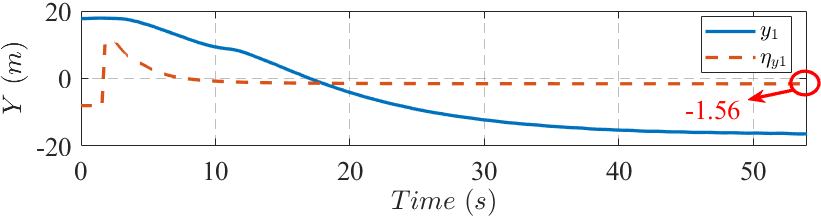}
	\end{minipage}
	\begin{minipage}{0.38\textwidth}
		\centering
		\includegraphics[width=\textwidth]{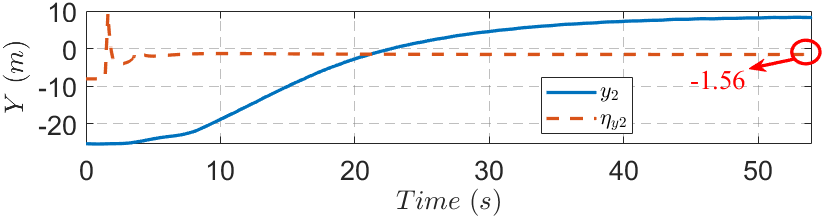}
	\end{minipage}
		\begin{minipage}{0.38\textwidth}
		\centering
		\includegraphics[width=\textwidth]{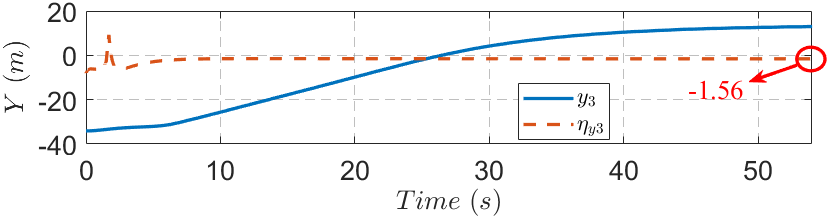}
	\end{minipage}
	\begin{minipage}{0.38\textwidth}
		\centering
		\includegraphics[width=\textwidth]{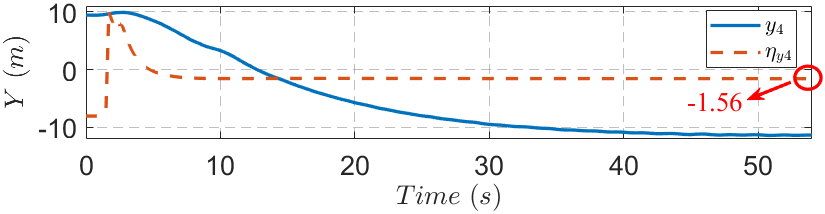}
	\end{minipage}
	\caption{Comparison of centroid estimation curves $\hat{\eta}_i(t)$ and actual trajectories $Q_i(t)$.}
	\label{f9}
\end{figure}

As illustrated in Fig.~\ref{f9}, the centroid estimation $\hat\eta_i(t)$ of each USV asymptotically converge to the same vector $[2.10,-1.56]^\top$, which coincides with the final geometric center of the fleet. It is also observed that the actual positions $Q_i(t)$ of the $i$-th USV remains significantly distinct from its estimation $\hat\eta_i(t)$ throughout the entire process, where $i=1,\cdots,4$. In conjunction with Theorem \ref{ppp1}, this indicates that neither HBC adversaries nor external eavesdroppers are able to infer the actual positions of neutral USVs from the exchanged estimation information. Consequently, the proposed hierarchical formation control scheme not only achieves the desired control objective but also ensures privacy preservation for neutral USVs against both types of adversaries.

\subsection{Extension to Time-Varying Formations}
The proposed formation control framework can be readily extended to accommodate time-varying formations. Specifically, by simply modifying the desired formation vector $\delta_i^*$ of each USV, the fleet can be reconfigured to achieve different geometric patterns without altering the underlying control law.

To demonstrate this capability, Fig.~\ref{f10} presents snapshots from the experiment, illustrating the USV fleet transitioning between two distinct parallelogram formation configurations. A complete video recording of the experiment is available at the following link: \textit{https://youtu.be/88P3ikO8CCg}. Interested readers are encouraged to view it for a more comprehensive visualization of the formation transition process.
 \begin{figure*}[!t]
	\centering
	\includegraphics[width=7in]{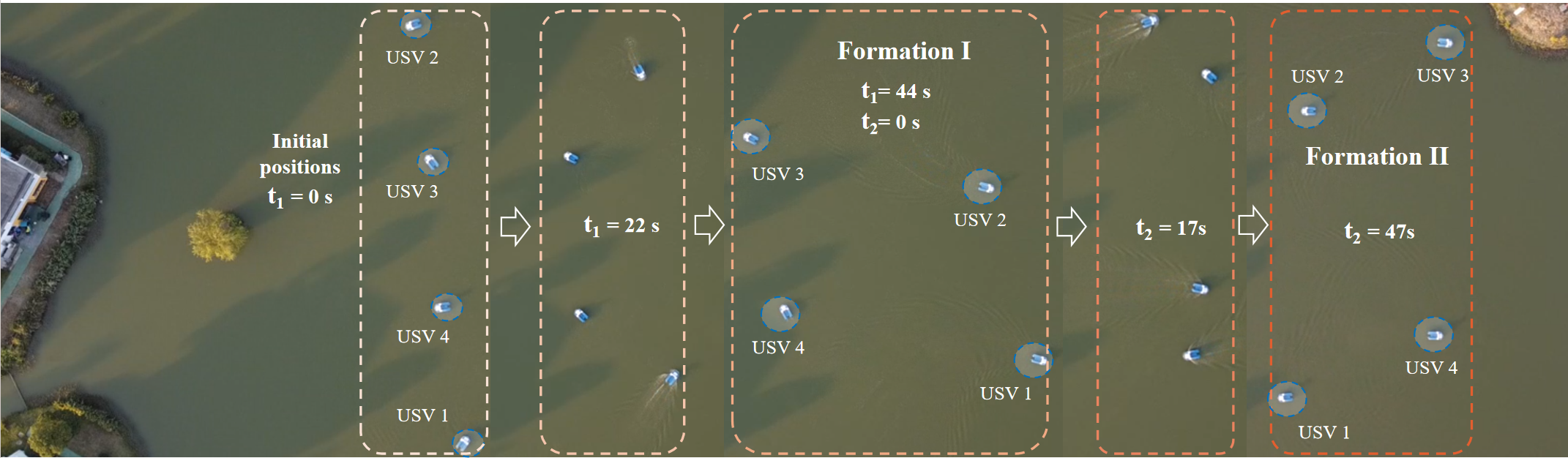}
	\caption{Snapshots of time-varying formation maneuvers.}
	\label{f10} 
\end{figure*}
\begin{remark}
It should be noted that the formation trajectories in the experiments were designed to ensure a safe distance between USVs at all times. As a result, no collision occurred during the trials, and collision avoidance mechanisms are beyond the scope of this paper. Further studies will address this issue in more complex scenarios.
\end{remark}
\section{Conclusion}
In this paper, a novel privacy-preserving formation control framework has been proposed for multi-USV systems within the PH framework. By utilizing the estimated centroid of the fleet as the sole interactive signal, the scheme effectively protects the privacy of each USV without compromising formation performance. To seamlessly integrate  the trajectory planning and physical execution, a passivity-based tracking controller has been developed, achieving high-precision formation tracking while robustly safeguarding privacy against HBC adversaries and external  eavesdroppers. The rigorous stability analysis and field experiments on a practical USV platform confirm that this paper provides a feasible and secure solution for the distributed formation of USVs in privacy-sensitive environments. In the future, we aim to extend the proposed framework to complex environments with dynamic obstacles and intermittent communication constraints.




\end{document}